\newcommand{\al}{\alpha}
\newcommand{\ga}{\gamma}
\newcommand{\de}{\delta}
\newcommand{\la}{\lambda}
\newcommand{\eps}{\varepsilon}
\newcommand{\vv}{\varphi}
\newcommand{\iy}{\infty}
\theoremstyle{plain}
\numberwithin{equation}{section}
\newtheorem{thm}{Theorem}[section]
\newtheorem{lem}[thm]{Lemma}
\newtheorem{cor}[thm]{Corollary}
\theoremstyle{definition}
\newtheorem{example}[thm]{Example}
\newtheorem{ip}[thm]{Inverse Problem}
\theoremstyle{remark}
\newtheorem{remark}[thm]{Remark}
\DeclareMathOperator*{\Res}{Res}
\begin{document}

\begin{center}
{\large\bf Local solvability and stability of the inverse problem for the non-self-adjoint Sturm-Liouville operator}
\\[0.2cm]
{\bf Natalia P. Bondarenko} \\[0.2cm]
\end{center}

\vspace{0.5cm}

{\bf Abstract.} We consider the non-self-adjoint Sturm-Liouville operator on a finite interval. The inverse spectral problem is studied, which consists in recovering this operator from its eigenvalues and generalized weight numbers. We prove local solvability and stability of this inverse problem, relying on the method of spectral mappings. Possible splitting of multiple eigenvalues is taken into account.
  
\medskip

{\bf Keywords:} inverse spectral problems; non-self-adjoint Sturm-Liouville operator; generalized spectral data; local solvability; stability; method of spectral mappings.

\medskip

{\bf AMS Mathematics Subject Classification (2010):} 34A55 34B09 34B24 34L40

\vspace{1cm}

\section{Introduction}

The paper concerns the theory of inverse spectral problems for differential operators. Such problems consist in constructing operators by their spectral information.

We consider the Sturm-Liouville boundary value problem (BVP) $L = L(q(x), h, H)$ in the following form
\begin{gather} \label{eqv}
    -y'' + q(x) y = \la y, \quad x \in (0, \pi), \\ \label{bc}
    y'(0) - h y(0) = 0, \quad y'(\pi) + H y(\pi) = 0.
\end{gather}
Here $\la$ is the spectral parameter, $q$ is the complex-valued function from $L_2(0, \pi)$, called {\it the potential}, $h$ and $H$ are complex numbers. Denote by $\{ \la_n \}_{n = 0}^{\iy}$ the eigenvalues of $L$ counted with their multiplicities and numbered so that $|\la_n| \le |\la_{n+1}|$, $n \ge 0$.
The eigenvalue problem \eqref{eqv}-\eqref{bc} appears after separation of variables in problems of mathematical physics, describing wave propagation, heating and other processes.

The most complete results on inverse problems of spectral analysis have been obtained for {\it self-adjoint} Sturm-Liouville operators. Those results include uniqueness theorems, algorithms for constructive solution, spectral data characterization, local solvability and stability (see the monographs \cite{Mar77, Lev84, PT87, FY01} and references therein).

Let us formulate one of the classical inverse problems for $L$ in the self-adjoint case (i.e. when $q(x)$ for $x \in (0, \pi)$, $h$ and $H$ are real). In this case, all the eigenvalues $\{ \la_n \}_{n = 0}^{\iy}$ are real and simple. Let $\vv(x, \la)$ be the solution of equation~\eqref{eqv}, satisfying the initial conditions $\vv(0, \la) = 1$, $\vv'(0, \la) = h$.
Define the weight numbers
\begin{equation} \label{defal}
\al_n := \int_0^{\pi} \vv^2(x, \la_n) \, dx, \quad n \ge 0.
\end{equation}

\begin{ip} \label{ip:sd}
Given the data $\{ \la_n, \al_n \}_{n = 0}^{\iy}$, construct $q(x)$, $h$ and $H$.
\end{ip}

Inverse problem~\ref{ip:sd} is equivalent to the inverse problem by the spectral function, studied by Marchenko \cite{Mar77}.

In the {\it non-self-adjoint case}, some of the eigenvalues $\{ \la_n \}_{n = 0}^{\iy}$ can be multiple, so the problem becomes more difficult for investigation. There are significantly less studies on inverse problems for the non-self-adjoint operator \eqref{eqv}-\eqref{bc}. In particular, in \cite{FY01} the classical results for the self-adjoint Sturm-Liouville operator are generalized to the non-self-adjoint case with simple eigenvalues. Tkachenko \cite{Tk02} developed the method for solving another inverse problem, which consists in recovering the non-self-adjoint Sturm-Liouville operator from two spectra corresponding to different boundary conditions.

Note that the spectral data $\{ \la_n, \al_n \}_{n = 0}^{\iy}$ defined above do not uniquely specify $q$, $h$ and $H$ in the general case.
Nevertheless, in \cite{But07} the so-called {\it generalized spectral data} (GSD) has been introduced in the following way.
Without loss of generality, we assume that multiple eigenvalues are consecutive: $\la_n = \la_{n + 1} = \ldots = \la_{n + m_n - 1}$, where $m_n$ is the multiplicity of the eigenvalue $\la_n$. By virtue of the well-known asymptotics
\begin{equation} \label{asymptla}
\sqrt{\la_n} = n + O\left( n^{-1}\right), \quad n \to \iy,
\end{equation}
we have $m_n = 1$ for sufficiently large $n$.
Define
\begin{gather*}
S := \{ 0 \} \cup \{ n \in \mathbb N \colon \la_n \ne \la_{n-1} \}, \\
\vv_{n + \nu}(x) := \frac{1}{\nu!} \frac{d^{\nu}}{d\la^{\nu}} \vv(x, \la)_{|\la = \la_n}, \quad n \in S, \quad \nu = \overline{0, m_n - 1}.
\end{gather*}

The sequence $\{ \vv_n \}_{n = 0}^{\iy}$ is a complete system of root-functions for the problem $L$. The {\it generalized weight numbers} are defined as follows
\begin{equation} \label{defgal}
\al_{n + \nu} = \int_0^{\pi} \vv_{n + \nu}(x) \vv_{n + m_n - 1}(x) \, dx, \quad n \in S, \quad \nu = \overline{0, m_n-1}.
\end{equation}
Clearly, the definition~\eqref{defgal} generalizes \eqref{defal}.

Thus, Inverse Problem~\ref{ip:sd} turns into the inverse problem by GSD $\{ \la_n, \al_n \}_{n = 0}^{\iy}$. Buterin \cite{But07} has proved the uniqueness theorem for this inverse problem and obtained a constructive algorithm for its solution, based on the method of spectral mappings \cite{FY01, Yur02}. The question of GSD characterization for Sturm-Liouville operators with complex-valued potentials was investigated in \cite{BSY13, AHM08}. However, necessary and sufficient conditions on GSD from \cite{BSY13, AHM08} require solvability of some main equations. Those requirements are difficult to verify.

The aim of this paper is to investigate local solvability and stability of Inverse Problem~\ref{ip:sd} in the non-self-adjoint case. Note that, under a small perturbation of the spectrum, multiple eigenvalues can split into smaller groups, so the generalized weight numbers change their form. As far as we know, this effect was not studied before. 

Some fragmentary results on stability under splitting of multiple eigenvalues were obtained in \cite{MW05, HK11, BB17} for various inverse problems. Recently Buterin and Kuznetsova \cite{BK19} proved local solvability and stability for the inverse problem by two spectra for the non-self-adjoint Sturm-Liouville operator. They also took splitting of multiple eigenvalues into account. 
However, Inverse Problem~\ref{ip:sd} appears to be more interesting for investigation because of generalized weight numbers changing their structure.

In \cite{BSY13}, some results on local solvability and stability were obtained for the inverse problem of recovering the non-self-adjoint Sturm-Liouville operator with the Dirichlet boundary conditions from GSD. However, the authors of~\cite{BSY13} considered only such perturbations of GSD that preserve eigenvalue multiplicities. In the present paper, arbitrary perturbations are studied, that can change eigenvalue multiplicities. We obtain special conditions on a GSD perturbation, which allow GSD to change their structure, but the perturbation of the potential remains small in $L_2$-norm.

The paper is organized as follows. In Section~2, our main Theorems~\ref{thm:loc} and~\ref{thm:disc} on local solvability and stability are formulated. Their proofs are constructive and develop the ideas of the method of spectral mappings \cite{FY01, Yur02, But07}. This method consists in reduction of a nonlinear inverse problem to a linear equation in a Banach space, called {\it the main equation}. The main equation and the corresponding Banach space are specially constructed for every certain inverse problem. For our problem, the most important feature of the main equation is that it contains ``continuous'' and ``discrete'' components. Derivation of the main equation and the proof of its unique solvability are provided in Section~3. In Section~4, we finish the proofs of Theorems~\ref{thm:loc} and~\ref{thm:disc}, and consider an example. In Section~5, analogous results are formulated for equation~\eqref{eqv} with the Dirichlet boundary conditions.

\section{Main Results}

We start with some preliminaries. Let $\Phi(x, \la)$ be the solution of Eq.~\eqref{eqv}, satisfying the boundary conditions $\Phi'(0, \la) + h \Phi(0, \la) = 1$, $\Phi'(\pi, \la) + H \Phi(\pi, \la) = 0$. The function $M(\la) := \Phi(0, \la)$ is called the {\it Weyl function} of the problem $L$. Weyl functions are natural spectral characteristics for various self-adjoint and non-self-adjoint operators (see \cite{Mar77, FY01}). In \cite{But07}, the following representation has been obtained:
$$
M(\la) = \sum_{n \in S} \sum_{\nu = 0}^{m_n - 1} \frac{M_{n + \nu}}{(\la - \la_n)^{\nu + 1}}.
$$
The coefficients $\{ M_n \}_{n = 0}^{\iy}$ can be uniquely determined by the generalized weight numbers $\{ \al_n \}_{n = 0}^{\iy}$ and vice versa from the linear system
$$
\sum_{k = 0}^{\nu} \al_{n + \nu - k} M_{n + m_n - k - 1} = \de_{\nu, 0}, \quad n \in S, \quad \nu = \overline{0, m_n - 1}.
$$
In particular, $M_n = \al_n^{-1}$, if $m_n = 1$.
Thus, Inverse Problem~\ref{ip:sd} by the GSD is equivalent to the following one.

\begin{ip} \label{ip:M}
Given the data $G := \{ \la_n, M_n \}_{n = 0}^{\iy}$, find $q$, $h$ and $H$.
\end{ip}

Further we study Inverse Problem~\ref{ip:M} instead of Inverse Problem~\ref{ip:sd}.

Along with the problem $L$, we consider complex numbers $\tilde G = \{ \tilde \la_n, \tilde M_n \}_{n = 0}^{\iy}$. We will show that, if the data $\tilde G$ are ``sufficiently close'' to $G = \{ \la_n, M_n \}_{n = 0}^{\iy}$ in some sense (a rigorous formulation is given in Theorem~\ref{thm:loc} below), then $\tilde G$ will correspond to some BVP $\tilde L = L(\tilde q(x), \tilde h, \tilde H)$ of the same form as $L$, but with different coefficients. We agree that, if a certain symbol $\ga$ is related to $L$, then the symbol $\tilde \ga$ with tilde is the analogous object constructed by the data $\tilde G$. 

Let $N = N(L) \ge 0$ be the minimal integer such that $m_n = 1$ for $n > N$ and $|\la_N| < |\la_{N+1}|$. Fix any real $r \in (|\la_N|, |\la_{N + 1}|)$, and consider the contour $\ga_N = \ga_N(L) = \{ \la \in \mathbb C \colon |\la| = r \}$ surrounding the eigenvalues $\{ \la_n \}_{n = 0}^N$. Put 
\begin{gather} \nonumber
S_N := S \cap \{ 0, \dots, N \}, \quad \tilde S_N := \tilde S \cap \{ 0, \dots, N \}, \\ \label{defMN}
{\mathcal M}_N(\la) := \sum_{n \in S_N} \sum_{\nu = 0}^{m_n - 1} \frac{M_{n + \nu}}{(\la - \la_n)^{\nu + 1}}, \quad
\tilde {\mathcal M}_N(\la) := \sum_{n \in \tilde S_N} \sum_{\nu = 0}^{\tilde m_n - 1} \frac{\tilde M_{n + \nu}}{(\la - \tilde \la_n)^{\nu + 1}}, \quad
\hat {\mathcal M}_N := \tilde {\mathcal M}_N - {\mathcal M}_N.
\end{gather}
Note that the function $\hat {\mathcal M}_N(\la)$ is constructed by the data $\{ \la_n, M_n \}_{n  = 0}^N$ and $\{ \tilde \la_n, \tilde M_n \}_{n = 0}^N$.
Define $\rho_n := \sqrt{\la_n}$, $\arg \rho_n \in \left[-\tfrac{\pi}{2}, \tfrac{\pi}{2}\right)$, and
$\xi_n := |\rho_n - \tilde \rho_n| + |M_n - \tilde M_n|$ for $n \ge 0$.

\begin{thm} \label{thm:loc}
Let $L = L(q(x), h, H)$ be a fixed BVP in the form~\eqref{eqv}-\eqref{bc}, $N = N(L)$, $\ga_N = \ga_N(L)$. Then there exists $\de_0 > 0$ (depending on $L$) such that for any $\de \in (0, \de_0]$ and any complex numbers $\tilde G = \{ \tilde \la_n, \tilde M_n \}_{n = 0}^{\iy}$, satisfying the conditions
\begin{gather} \label{estM}
\max_{\la \in \ga_N} |\hat {\mathcal M}_N(\la)| \le \de, \\ \label{estxi}
\left( \sum_{n = N + 1}^{\iy} (n \xi_n)^2 \right)^{1/2} \le \de,
\end{gather}
there exist a complex-valued function $\tilde q \in L_2(0, \pi)$ and complex numbers $\tilde h, \tilde H$ being the solution of Inverse Problem~\ref{ip:M} for $\tilde G$. Moreover,
\begin{equation} \label{stab}
\| q - \tilde q \|_{L_2(0, \pi)} \le C \de, \quad |h - \tilde h| \le C \de, \quad |H - \tilde H| \le C \de.
\end{equation}
\end{thm}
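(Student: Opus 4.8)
The plan is to follow the method of spectral mappings, reducing Inverse Problem~\ref{ip:M} to a linear equation in a Banach space and then controlling the solution by the perturbation parameter $\de$. First I would introduce the standard transformation operator representation $\vv(x,\la) = \cos\rho x + \int_0^x K(x,t)\cos\rho t\,dt$ and the analogous objects for the unknown problem $\tilde L$, and form the differences $\hat\vv_n = \tilde\vv_n - \vv_n$, $\hat M_n = \tilde M_n - M_n$, $\hat\rho_n = \tilde\rho_n - \rho_n$. Using the contour integral representation of the resolvent-type kernel and Cauchy's theorem, one writes the relation between $\vv(x,\la)$ and $\tilde\vv(x,\la)$ as a sum over $n \le N$ of a contour integral over $\ga_N$ (the ``continuous'' part, governed by $\hat{\mathcal M}_N$) plus a series over $n > N$ of rank-one terms built from $\hat\rho_n$ and $\hat M_n$ (the ``discrete'' part). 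This yields the main equation $(I - \tilde R(x))\psi(x) = \tilde\psi(x)$, or in the convenient form $\varphi(x) = (I + \tilde H(x))\tilde\varphi(x)$, posed in a Banach space $\mathfrak m$ of sequences/functions with weights chosen so that $\|\cdot\|$ reflects \eqref{estM}--\eqref{estxi}; the key structural point, emphasized in the introduction, is that the space must simultaneously accommodate the finitely many ``discrete'' components indexed by $S_N$ (or $\tilde S_N$, which may differ!) and the tail.

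Next I would establish that the operator appearing in the main equation is bounded, with norm bounded by $C\de$, and that $I$ minus that operator (for the base problem $L$, where the perturbation vanishes) is invertible; by a Neumann-series / perturbation argument, unique solvability of the main equation for $\tilde G$ follows once $\de_0$ is small enough, together with the estimate $\|\varphi - \tilde\varphi\| \le C\de$. The estimates \eqref{estM} and \eqref{estxi} are precisely tailored so that the $\ga_N$-integral term contributes $O(\de)$ to the operator norm and the tail series contributes $O\!\big(\sum_{n>N}(n\xi_n)^2\big)^{1/2} = O(\de)$; the weight $n$ in \eqref{estxi} matches the $n$-growth coming from $\cos\rho_n x$ and the spacing of eigenvalues, exactly as in the classical self-adjoint theory. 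One subtlety to handle carefully is that the index set $\tilde S_N$ of the perturbed problem need not coincide with $S_N$, so the ``discrete'' block of the main equation changes size; condition \eqref{estM}, being stated in terms of the meromorphic functions ${\mathcal M}_N, \tilde{\mathcal M}_N$ rather than coefficient-by-coefficient, is what makes this robust — it controls the difference of the two rational functions uniformly on $\ga_N$ regardless of how poles merge or split.

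Finally, from the solution $\tilde\varphi$ of the main equation I would reconstruct $\tilde q$, $\tilde h$, $\tilde H$ via the standard formulas of the method of spectral mappings (e.g. $\tilde q(x) = q(x) - 2\frac{d}{dx}[\text{a bilinear functional of }\tilde\varphi]$, and $\tilde h$, $\tilde H$ from the values/derivatives of the reconstructed $\tilde\vv$ at the endpoints), verify that $\tilde L = L(\tilde q,\tilde h,\tilde H)$ indeed has $\tilde G$ as its spectral data (this is the ``sufficiency'' direction, checked by retracing the derivation of the main equation), and push the bound $\|\varphi-\tilde\varphi\|\le C\de$ through these reconstruction formulas — which are continuous — to obtain \eqref{stab}. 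I expect the main obstacle to be the careful bookkeeping in the derivation and norm-estimation of the main equation when multiple eigenvalues split: one must choose the Banach space and its norm so that a small $\hat{\mathcal M}_N$ on $\ga_N$ translates into a small perturbation of the (possibly larger) discrete block, and so that the reconstruction map stays uniformly continuous as the block structure varies. The tail estimates, by contrast, are essentially the classical argument with complex (rather than real) $\rho_n$, and should go through with only notational changes.
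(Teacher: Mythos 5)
Your proposal follows essentially the same route as the paper: the contour-integral relation splitting into a ``continuous'' part over $\ga_N$ governed by $\hat{\mathcal M}_N$ and a ``discrete'' tail series, the main equation $\psi(x)=(I+R(x))\tilde\psi(x)$ in a product Banach space, the operator-norm bound $\|R(x)\|\le C\de$ with Neumann-series invertibility, and reconstruction via $\tilde q = q - 2\eps_0'$ with endpoint corrections for $\tilde h,\tilde H$. The one point you flag as a subtlety --- the varying size of the low-index block --- is resolved in the paper exactly as you suggest: the indices $n\le N$ never appear as discrete components at all, but only through the function $\hat{\mathcal M}_N$ on the contour, i.e.\ as an element of $C(\ga_N)$.
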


Here and below the same symbol $C$ is used for various positive constants depending on $L$ and $\de_0$ and independent of $\de$, $\tilde G$, etc.

Recall that the function ${\mathcal M}_N(\la)$ is fixed, and all its poles lie inside $\ga_N$. The condition~\eqref{estM} for sufficiently small $\de$ implies that all the poles of $\tilde {\mathcal M}_N(\la)$ also lie inside $\ga_N$. Moreover, the following estimate holds:
$$
|\la_n - \tilde \la_n| \le C \de^{1/(N+1)}, \quad n = \overline{0, N}.
$$
However, the values $\la_n$ and $\tilde \la_n$ can have different multiplicities. Namely, multiple values $\la_n$ can split into smaller groups, so $S \subseteq \tilde S$, $\tilde m_n = m_n = 1$ for all $n > N$.

We also obtain local solvability and stability conditions on the discrete data, not involving the continuous function $\hat {\mathcal M}_N$. Such conditions are provided in the following theorem.

\begin{thm} \label{thm:disc}
Let $L = L(q(x), h, H)$ be a fixed BVP in the form~\eqref{eqv}-\eqref{bc}, $N = N(L)$. Then there exists $\de_0 > 0$ (depending on $L$) such that for any $\de \in (0, \de_0]$ and any complex numbers $\tilde G = \{ \tilde \la_n, \tilde M_n \}_{n = 0}^{\iy}$, satisfying the conditions \eqref{estxi} and
\begin{equation} \label{dist} 
\tilde \la_n \ne \tilde \la_k, \quad n \ne k, \quad n, k \ge 0, 
\end{equation}
\begin{equation} \label{estMla}
\def\arraystretch{1.5}
\left.
\begin{array}{l}
\biggl| \sum\limits_{j = 0}^{m_k - 1} (\tilde \la_{k + j} - \la_k)^s \tilde M_{k + j} - M_{k + s}\biggr| \le \de, \quad s = \overline{0, m_k - 1}, \\
\biggl| \sum\limits_{j = 0}^{m_k - 1} (\tilde \la_{k + j} - \la_k)^s \tilde M_{k + j}\biggr| \le \de, \quad s = \overline{m_k, 2(m_k - 1)}, \\
|\tilde \la_{k + j} - \la_k| \le \de^{1/m_k}, \quad |\tilde M_{k + j}| \le \de^{(1 - m_k)/m_k},
\end{array}
\right\} \quad k \in S_N,
\end{equation}
there exist a complex-valued function $q \in L_2(0, \pi)$ and complex numbers $h, H$ being the solution of Inverse Problem~\ref{ip:M} for $\tilde G$. Moreover, the estimates~\eqref{stab} hold.
\end{thm}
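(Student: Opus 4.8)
The plan is to \emph{reduce Theorem~\ref{thm:disc} to Theorem~\ref{thm:loc}}: I will show that the purely discrete conditions \eqref{dist} and \eqref{estMla} imply the ``continuous'' estimate \eqref{estM} with $\de$ replaced by $C\de$. Granting this, Theorem~\ref{thm:loc}, applied with its $\de_0$ divided by $C$ (and with $\de_0$ further shrunk so that the elementary bounds below hold), at once produces the solution $\tilde L = L(\tilde q, \tilde h, \tilde H)$ of Inverse Problem~\ref{ip:M} for $\tilde G$ together with the stability estimates \eqref{stab}; the condition \eqref{estxi} is common to the two theorems and is simply passed on.

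Fix $\de \le \de_0$ and $\tilde G$ as in the hypotheses. By \eqref{dist} the numbers $\tilde\la_n$ are pairwise distinct, so $\tilde m_n \equiv 1$, $\tilde S_N = \{0,1,\dots,N\}$, and regrouping $\{0,\dots,N\}$ into the blocks $B_k := \{k, k+1, \dots, k+m_k-1\}$, $k \in S_N$, determined by the multiplicity structure of $L$, we get
\begin{equation*}
\tilde{\mathcal M}_N(\la) = \sum_{n=0}^{N}\frac{\tilde M_n}{\la-\tilde\la_n} = \sum_{k \in S_N}\sum_{j=0}^{m_k-1}\frac{\tilde M_{k+j}}{\la-\tilde\la_{k+j}}.
\end{equation*}
Put $d := \min_{k \in S_N}\mathrm{dist}(\la_k, \ga_N) > 0$ (every $\la_k$ with $k \le N$ lies strictly inside $\ga_N$). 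For $\la \in \ga_N$ set $z := \la - \la_k$, $w_{k,j} := \tilde\la_{k+j} - \la_k$; the third line of \eqref{estMla} gives $|w_{k,j}| \le \de^{1/m_k}$, so for $\de_0$ small enough $|w_{k,j}| \le |z|/2$ and the geometric expansion $(\la - \tilde\la_{k+j})^{-1} = \sum_{s \ge 0} w_{k,j}^{\,s} z^{-s-1}$ converges absolutely and uniformly on $\ga_N$. Substituting and interchanging the order of summation,
\begin{equation*}
\sum_{j=0}^{m_k-1}\frac{\tilde M_{k+j}}{\la-\tilde\la_{k+j}} = \sum_{s=0}^{\iy}\frac{1}{(\la-\la_k)^{s+1}}\Biggl(\sum_{j=0}^{m_k-1} w_{k,j}^{\,s}\,\tilde M_{k+j}\Biggr).
\end{equation*}

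The inner bracket is now estimated termwise by \eqref{estMla}: for $0 \le s \le m_k - 1$ it equals $M_{k+s} + O(\de)$ (first line); for $m_k \le s \le 2(m_k-1)$ it is $O(\de)$ (second line); and for $s \ge 2m_k - 1$ the crude bounds $|w_{k,j}| \le \de^{1/m_k}$, $|\tilde M_{k+j}| \le \de^{(1-m_k)/m_k}$ yield that it is $O\bigl(\de^{(s+1-m_k)/m_k}\bigr)$, a quantity equal to $\de$ at $s = 2m_k - 1$ and decaying geometrically with ratio $\de^{1/m_k}$ afterwards. Since $|\la - \la_k|^{-1} \le d^{-1}$ on $\ga_N$, it follows that the $s$-series differs from $\sum_{s=0}^{m_k-1} M_{k+s}(\la-\la_k)^{-s-1}$ — the $k$-th summand of ${\mathcal M}_N(\la)$ — by a term of modulus $\le C\de$ uniformly on $\ga_N$ (the tail $s \ge 2m_k - 1$ being summable because $\de^{1/m_k} < d$ for $\de_0$ small). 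Summing over the finite set $S_N$ yields precisely $\max_{\la \in \ga_N}|\hat{\mathcal M}_N(\la)| \le C\de$, and the proof is finished by invoking Theorem~\ref{thm:loc}.

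\textbf{Main obstacle.} The only delicate point is the \emph{calibration of the exponents in \eqref{estMla}}. The hazardous Laurent coefficients are those with $m_k \le s \le 2(m_k-1)$: here the individual products $w_{k,j}^{\,s}\tilde M_{k+j}$ need not be $O(\de)$ — indeed $\tilde M_{k+j}$ may be as large as $\de^{(1-m_k)/m_k}$ — yet their sum over $j$ is $O(\de)$ by the second line of \eqref{estMla}, which is the reason that line is included. Only once $s$ reaches $2m_k - 1$ does the factor $\de^{s/m_k}$ coming from $w_{k,j}^{\,s}$ overcome the possible growth of $\tilde M_{k+j}$, so that the crude bounds of the third line suffice to control the tail. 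A secondary, routine, point is to check that the blocks $B_k$ of perturbed eigenvalues are matched to the correct $\la_k$; this is already enforced by the indexing in \eqref{estMla}.
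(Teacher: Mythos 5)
Your proposal is correct and follows essentially the same route as the paper: reduce to Theorem~\ref{thm:loc} by expanding each $(\la-\tilde\la_{k+j})^{-1}$ in powers of $(\la-\la_k)^{-1}$ and matching the Laurent coefficients against the three lines of \eqref{estMla} to get $\max_{\la\in\ga_N}|\hat{\mathcal M}_N(\la)|\le C\de$. The only (immaterial) differences are that the paper truncates the expansion at $s=2m_k-1$ with an explicit remainder term rather than summing the full geometric tail, and that it writes out only the single-multiple-eigenvalue case for simplicity.
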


The condition~\eqref{dist} is imposed for simplicity. One can similarly consider the case of multiple values among $\{ \tilde \la_n \}_{n = 0}^N$, but then one needs more complicated requirements instead of the relations~\eqref{estMla}.

Theorems~\ref{thm:loc} and~\ref{thm:disc} generalize their analogue for the self-adjoint case \cite[Theorem~1.6.4]{FY01}.

\section{Main Equation}

The goal of this section is to derive the main equation in a Banach space, which plays a crucial role in the proofs of the main results. Our approach is based on the method of spectral mappings (see \cite{Yur02}).
Since a part of the proofs repeat the standard technique of \cite[Section 1.6]{FY01} and \cite{But07}, we omit the details and focus on the differences of our methods from the classical ones. 

Let us consider two BVPs $L = L(q(x), h, H)$ and $\tilde L = (\tilde q(x), \tilde h, \tilde H)$ with different coefficients. Fix $N = N(L)$ and the contour $\ga_N$. Assume that the eigenvalues $\{ \tilde \la_n \}_{n = 0}^{\iy}$ lie inside $\ga_N$, and $\{ \tilde \la_n \}_{n = N+1}^{\iy}$ lie outside $\ga_N$.

Define 
$$
D(x, \la, \xi) := \frac{\vv(x, \la) \vv'(x, \xi) - \vv'(x, \la) \vv(x, \xi)}{\la - \xi} = 
\int_0^x \vv(t, \la) \vv(t, \xi) \, dt.
$$

Consider the contour $\ga := \{ \la \in \mathbb C \colon \mbox{Re} \, \la = -p, \, \mbox{Im} \, \la \in [-p, p] \} \cup \{ \la \in \mathbb C \colon \mbox{Re} \, \la \ge -p, \, \mbox{Im}\la = \pm p \}$ with the counter-clockwise circuit. The constant $p$ is chosen so that all the eigenvalues $\{ \la_n \}_{n = 0}^{\iy}$ and $\{ \tilde \la_n \}_{n = 0}^{\iy}$ lie inside $\ga$.
Using the contour integration, we obtain the relation
$$
\vv(x, \la) = \tilde \vv(x, \la) + \frac{1}{2 \pi i} \int_{\ga} D(x, \la, \xi) \hat M(\xi) \tilde \vv(x, \xi) \, d\xi,
$$
where $\hat M := \tilde M - M$. Applying the Residue Theorem and noting that the function $(\hat M(\la) - \hat {\mathcal M}_N(\la))$ is analytic inside $\ga_N$, we obtain the relation
\begin{align} \nonumber
    \vv(x, \la) & = \tilde \vv(x, \la) + \frac{1}{2 \pi i} \int_{\ga_N} D(x, \la, \xi) \hat {\mathcal M}_N(\xi) \tilde \vv(x, \xi) \, d\xi \\ \label{cont} & + \sum_{n = N + 1}^{\iy} (\tilde M_n D(x, \la, \tilde \la_n)  \tilde \vv(x, \tilde \la_n) - M_n D(x, \la, \la_n) \tilde \vv(x, \la_n)).
\end{align}

We use the relation~\eqref{cont} for deriving the main equation of Inverse Problem~\ref{ip:M} in a special Banach space. Denote by $B_C$ the Banach space of functions continuous on $\ga_N$ with the norm
$$
\| f_C \|_{B_C} = \max_{\la \in \ga_N} |f_C(\la)|, \quad f_C \in B_C.
$$
Denote by $B_D$ the Banach space of bounded infinite sequences $f_D = [ f_n ]_{n = 1}^{\iy}$ with the norm
$$
\| f_D \|_{B_D} = \sup_{n \ge 1} |f_n|, \quad f_D \in B_D.
$$
Define the Banach space
$$
B := \{ f = (f_C, f_D) \colon f_C \in B_C, f_D \in B_D \},
\quad \| f \|_B := \| f_C \|_C + \| f_D \|_D.
$$
Here and below the lower indices $C$ and $D$ mean a ``continuous'' and a ``discrete'' part, respectively.

For every $x \in [0, \pi]$, define the element $\psi(x) = (\psi_C(x), \psi_D(x))$, where
\begin{gather*}
\psi_C(x, \la) = \vv(x, \la), \quad \la \in \ga_N, \qquad
\psi_D(x) = [\psi_n(x)]_{n = 1}^{\iy}, \\
\psi_{2j-1}(x) = \vv(x, \tilde \la_{N + j}), \quad
\psi_{2j}(x) = \chi_{N + j} (\vv(x, \la_{N + j}) - \vv(x, \tilde \la_{N + j})), \quad j \ge 1, \\
\chi_n := \begin{cases}
            \xi_n^{-1}, \quad \text{if} \: \xi_n \ne 0, \\
            0, \qquad \text{if} \: \xi_n = 0.
          \end{cases}
\end{gather*}

The element $\tilde \psi(x)$ is defined analogously, by using $\tilde \vv$ instead of $\vv$.

For the solution $\vv(x, \la)$, the following standard asymptotics is valid:
\begin{equation} \label{asymptphi}
\vv(x, \la) = \cos \rho x + O\left( \rho^{-1} \exp(|\mbox{Im}\, \rho| x)\right), \quad |\rho| \to \iy,
\end{equation}
where $\rho = \sqrt{\la}$, $\mbox{Re}\,\rho \ge 0$. Using~\eqref{asymptla} and \eqref{asymptphi}, we obtain the estimates
$$
|\vv(x, \la_n)| \le C, \quad |\vv(x, \la_n) - \vv(x, \tilde \la_n)| \le C \xi_n, \quad x \in [0, \pi], \quad n \ge 0, 
$$
where the constant $C$ does not depend on $x$ and $n$. Analogous relations are valid for $\tilde \vv(x, \la)$.
Consequently, for each fixed $x \in [0, \pi]$, we have $\psi(x) \in B$ and $\tilde \psi(x) \in B$. 

For each fixed $x \in [0, \pi]$, we define the linear bounded operator $R(x) \colon B \to B$ as follows:
\begin{gather*}
    R(x) = \begin{pmatrix}
                R_{CC}(x) & R_{CD}(x) \\
                R_{DC}(x) & R_{DD}(x)
           \end{pmatrix},
    \\ R_{CC}(x) \colon B_C \to B_C, \quad R_{CD}(x) \colon B_D \to B_C, \quad 
    R_{DC}(x) \colon B_C \to B_D, \quad R_{DD}(x) \colon B_D \to B_D, \\
    R(x) f = (R_{CC}(x) f_C + R_{CD}(x) f_D, R_{DC}(x) f_C + R_{DD}(x) f_D), \quad
    f = (f_C, f_D) \in B, 
\end{gather*}
\begin{align} \label{RCC}
    (R_{CC}(x) f_C)(\la) & = \frac{1}{2 \pi i} \int_{\ga_N} D(x, \la, \xi) \hat {\mathcal M}_N(\xi) f_C(\xi) \, d\xi, \\ \label{RCD}
    (R_{CD}(x) f_D)(\la) & = \sum_{k = 1}^{\iy} ((\tilde M_{N + k} D(x, \la, \tilde \la_{N + k}) - M_{N + k} D(x, \la, \la_{N + k})) f_{2k-1} \\ \nonumber & - \xi_{N + k} M_{N + k}D(x, \la, \la_{N + k}) f_{2k}), \\ \nonumber
    (R_{DC}(x) f_C)_{2j-1} & = \frac{1}{2 \pi i} \int_{\ga_N} D(x, \tilde \la_{N + j}, \xi) \hat {\mathcal M}_N(\xi) f_C(\xi) \, d\xi, \\ \nonumber
    (R_{DC}(x) f_C)_{2j} & = \frac{1}{2\pi i} \int_{\ga_N} (D(x, \la_{N + j}, \xi) - D(x, \tilde \la_{N + j}, \xi)) \chi_{N + j} \hat {\mathcal M}_N(\xi) f_C(\xi) \, d\xi, \\ \nonumber
    (R_{DD}(x) f_D)_{2j-1} & = \sum_{k = 1}^{\iy} (( \tilde M_{N + k}D(x, \tilde \la_{N + j}, \tilde \la_{N + k}) - M_{N + k} D(x, \tilde \la_{N + j}, \la_{N + k})) f_{2k-1} \\ \nonumber
    & - \xi_{N + k} M_{N + k} D(x, \tilde \la_{N + j}, \la_{N + k}) f_{2k}), \\ \nonumber
    (R_{DD}(x) f_D)_{2j} & = \chi_{N + j} \sum_{k = 1}^{\iy} ((\tilde M_{N + k} (D(x, \la_{N + j}, \tilde \la_{N + k}) - D(x, \tilde \la_{N + j}, \tilde \la_{N + k})) \\ \nonumber & - M_{N + k} (D(x, \la_{N + j}, \la_{N + k}) - D(x, \tilde \la_{N + j}, \la_{N + k}))) f_{2k-1} \\ \nonumber & - \xi_{N + k} M_{N + k} (D(x, \la_{N + j}, \la_{N + k}) - D(x, \tilde \la_{N + j}, \la_{N + k})) f_{2k}),
\end{align}
where $\la \in \ga_N$, $j \ge 1$, $f_D = [ f_k ]_{k = 1}^{\iy}$.

Taking $\la \in \ga_N$, $\la = \tilde \la_n$ and $\la = \la_n$, $n > N$, in \eqref{cont}, we obtain the so-called {\it main equation} in the Banach space $B$:
\begin{equation} \label{main}
\psi(x) = (I + R(x)) \tilde \psi(x), \quad x \in [0, \pi].
\end{equation}
Here $I$ is the identity operator in $B$.

Now suppose that the problem $L$ and the data $\tilde G = \{ \tilde \la_n, \tilde M_n \}_{n = 0}^{\iy}$ satisfy the conditions of Theorem~\ref{thm:loc}. We choose $\de_0$ to be so small that the values $\{ \tilde \la_n \}_{n = 0}^N$ definitely lie inside $\ga_N$ and the values $\{ \tilde \la_n \}_{n > N}$ definitely lie outside $\ga_N$. It is not known whether the data $\tilde G$ correspond to any problem $\tilde L$ or not.
Let $\psi(x)$ and $R(x)$ be constructed by $L$ and $\tilde G$ via the formulas above.
Then the following assertion holds.

\begin{lem}
For each fixed $x \in [0, \pi]$, the following estimate is valid: 
\begin{equation} \label{estR}
\| R(x) \|_{B \to B} \le C \de,
\end{equation}
where the constant $C$ does not depend on $x$, $\de$ and on the choice of $\tilde G$, satisfying the conditions of Theorem~\ref{thm:loc}.
\end{lem}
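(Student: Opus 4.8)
The plan is to estimate the operator norm of $R(x)$ by bounding the norm of each of the four blocks $R_{CC}, R_{CD}, R_{DC}, R_{DD}$ separately, since $\|R(x)\|_{B\to B}$ is controlled (up to an absolute constant) by the maximum of the four block norms $R_{CC}\colon B_C\to B_C$, $R_{CD}\colon B_D\to B_C$, $R_{DC}\colon B_C\to B_D$, $R_{DD}\colon B_D\to B_D$. The key analytic inputs are: (i) the uniform bound $|D(x,\la,\xi)|\le C$ for $\la,\xi$ ranging over $\ga_N$ or over $\{\la_n\},\{\tilde\la_n\}$ with $n>N$, together with the Lipschitz-type bound $|D(x,\la,\la_n)-D(x,\la,\tilde\la_n)|\le C\xi_n$ and its analogues in both arguments (these follow from \eqref{asymptla}, \eqref{asymptphi} and the integral representation of $D$, exactly as the bounds on $\vv$ were derived in the text); (ii) for the ``continuous'' contributions, the hypothesis \eqref{estM}, i.e. $\max_{\la\in\ga_N}|\hat{\mathcal M}_N(\la)|\le\de$, so that $R_{CC}$ and $R_{DC}$ carry a factor $\de$ directly from $\hat{\mathcal M}_N$ under the integral over the fixed finite-length contour $\ga_N$; (iii) for the ``discrete'' contributions, the hypothesis \eqref{estxi}, $\bigl(\sum_{n>N}(n\xi_n)^2\bigr)^{1/2}\le\de$.

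For $R_{CC}$ the bound is immediate: $|(R_{CC}(x)f_C)(\la)|\le \frac{|\ga_N|}{2\pi}\,C\,\de\,\|f_C\|_{B_C}$, so $\|R_{CC}(x)\|\le C\de$. The operator $R_{DC}$ is handled the same way for the odd components, and for the even components one uses $(D(x,\la_{N+j},\xi)-D(x,\tilde\la_{N+j},\xi))\chi_{N+j}$, which is bounded by $C$ uniformly in $j$ by the Lipschitz estimate in $\la$ (the factor $\chi_{N+j}=\xi_{N+j}^{-1}$ exactly cancels the $\xi_{N+j}$ gained from the difference), again times $\de$ from $\hat{\mathcal M}_N$; hence $\|R_{DC}(x)\|\le C\de$. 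For $R_{CD}$ one rewrites the coefficient of $f_{2k-1}$ as
$$
\tilde M_{N+k}D(x,\la,\tilde\la_{N+k})-M_{N+k}D(x,\la,\la_{N+k})
=(\tilde M_{N+k}-M_{N+k})D(x,\la,\tilde\la_{N+k})+M_{N+k}\bigl(D(x,\la,\tilde\la_{N+k})-D(x,\la,\la_{N+k})\bigr),
$$
so this coefficient is $O(\xi_{N+k})$; the coefficient of $f_{2k}$ is $\xi_{N+k}M_{N+k}D(x,\la,\la_{N+k})=O(\xi_{N+k})$. Therefore
$$
|(R_{CD}(x)f_D)(\la)|\le C\|f_D\|_{B_D}\sum_{k=1}^{\iy}\xi_{N+k}\le C\|f_D\|_{B_D}\Bigl(\sum_{k=1}^{\iy}(k\,\xi_{N+k})^2\Bigr)^{1/2}\Bigl(\sum_{k=1}^{\iy}k^{-2}\Bigr)^{1/2}\le C\de\|f_D\|_{B_D},
$$
using Cauchy--Schwarz and \eqref{estxi} (with the harmless shift $n=N+k$, $n\xi_n\ge k\xi_{N+k}$ for $n>N$). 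Thus $\|R_{CD}(x)\|\le C\de$.

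The main obstacle is $R_{DD}$, where one must show that the operator norm on $B_D=\ell^\infty$ is $\le C\de$ uniformly; this requires a $\sup_j\sum_k$ estimate of the matrix entries. For the odd rows $(R_{DD}(x)f_D)_{2j-1}$, the coefficients of $f_{2k-1}$ and $f_{2k}$ are, by the same decomposition as for $R_{CD}$ (now with $D(x,\tilde\la_{N+j},\cdot)$ in place of $D(x,\la,\cdot)$, which is still uniformly bounded in $j$), of size $O(\xi_{N+k})$, so $\sum_k O(\xi_{N+k})\le C\de$ by Cauchy--Schwarz as above, uniformly in $j$. For the even rows $(R_{DD}(x)f_D)_{2j}$ one has the extra prefactor $\chi_{N+j}=\xi_{N+j}^{-1}$, which must be absorbed by a \emph{second} difference: the coefficient of $f_{2k-1}$ is
$$
\chi_{N+j}\Bigl(\tilde M_{N+k}\bigl(D(x,\la_{N+j},\tilde\la_{N+k})-D(x,\tilde\la_{N+j},\tilde\la_{N+k})\bigr)-M_{N+k}\bigl(D(x,\la_{N+j},\la_{N+k})-D(x,\tilde\la_{N+j},\la_{N+k})\bigr)\Bigr),
$$
and one applies the Lipschitz bound in the \emph{first} argument, $|D(x,\la_{N+j},\cdot)-D(x,\tilde\la_{N+j},\cdot)|\le C\xi_{N+j}$, so that $\chi_{N+j}$ cancels, leaving an expression of the same shape as before; after again splitting $\tilde M_{N+k}-M_{N+k}$ off and using a mixed-difference bound $|D(x,\la_{N+j},\tilde\la_{N+k})-D(x,\tilde\la_{N+j},\tilde\la_{N+k})-D(x,\la_{N+j},\la_{N+k})+D(x,\tilde\la_{N+j},\la_{N+k})|\le C\xi_{N+j}\xi_{N+k}$ (a first difference in each argument, which follows from the smoothness of $D$ in $\la$ and $\xi$), the coefficient of $f_{2k-1}$ is seen to be $O(\xi_{N+k})$ uniformly in $j$; the coefficient of $f_{2k}$ is $\chi_{N+j}\xi_{N+k}M_{N+k}\bigl(D(x,\la_{N+j},\la_{N+k})-D(x,\tilde\la_{N+j},\la_{N+k})\bigr)=O(\xi_{N+k})$ by the same cancellation. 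Summing over $k$ and using Cauchy--Schwarz with $\sum k^{-2}<\iy$ and \eqref{estxi} gives $\sum_k O(\xi_{N+k})\le C\de$ uniformly in $j$, hence $\|R_{DD}(x)\|\le C\de$. Collecting the four blocks yields \eqref{estR}, with $C$ independent of $x$, $\de$, and of $\tilde G$ subject to the hypotheses of Theorem~\ref{thm:loc}. The only genuinely delicate point is establishing the second-order (mixed) difference bounds for $D$ uniformly in $x$, which is where the asymptotics \eqref{asymptla}--\eqref{asymptphi} and the explicit integral form of $D$ are used; everything else is bookkeeping of Cauchy--Schwarz applications.
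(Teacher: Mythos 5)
Your proof is correct and follows essentially the same route as the paper: block-by-block operator-norm estimates, with \eqref{estM} supplying the factor $\de$ for the blocks involving $\hat{\mathcal M}_N$, the decomposition $\tilde M D(\cdot,\tilde\la)-MD(\cdot,\la)=(\tilde M-M)D+M(D-D)$ plus Cauchy--Schwarz with \eqref{estxi} for the discrete blocks, and first differences of $D$ in the first argument to cancel the factors $\chi_{N+j}$. The paper only writes out $R_{CC}$ and $R_{CD}$ and declares the rest ``similar''; your mixed second-difference bound $|D(x,\la_{N+j},\tilde\la_{N+k})-D(x,\tilde\la_{N+j},\tilde\la_{N+k})-D(x,\la_{N+j},\la_{N+k})+D(x,\tilde\la_{N+j},\la_{N+k})|\le C\xi_{N+j}\xi_{N+k}$, which follows from $D(x,\la,\mu)=\int_0^x\vv(t,\la)\vv(t,\mu)\,dt$ and $|\vv(t,\la_n)-\vv(t,\tilde\la_n)|\le C\xi_n$, is exactly the right tool for the omitted even rows of $R_{DD}$.
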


\begin{proof}
In order to prove~\eqref{estR}, it is sufficient to obtain the similar estimates for $\| R_{CC}(x) \|_{B_C \to B_C}$, $\| R_{CD}(x) \|_{B_D \to B_C}$, $\| R_{DC}(x) \|_{B_C \to B_D}$ and $\| R_{DD}(x) \|_{B_D \to B_D}$.
Using~\eqref{estM} and~\eqref{RCC}, we get
$$
\| R_{CC}(x) \|_{B_C \to B_C} \le \frac{1}{2\pi}\mbox{length}(\ga_N) \cdot \max_{\la, \xi \in \ga_N} |D(x, \la, \xi)| \cdot \max_{\xi \in \ga_N} |\hat {\mathcal M}_N(\xi)| \le C \de.
$$

The standard estimates (see \cite[Lemma~1.6.2]{FY01}) imply
$$
|D(x, \la, \tilde \la_n) - D(x, \la, \la_n)| \le \frac{C \exp(|\mbox{Im}\,\rho|x) \xi_n}{|\rho - n| + 1}, \quad n \ge 0.
$$
Combining the latter relation with~\eqref{RCD}, \eqref{estxi} and the obvious estimates 
\begin{equation} \label{estMn}
|M_n| \le C, \quad |\tilde M_n - M_n| \le \xi_n, \quad n \ge 0, 
\end{equation}
we get
\begin{align*}
    \| R_{CD}(x) \|_{B_D \to B_C} & \le \max_{\la \in \ga_N} \sum_{k = 1}^{\iy} (|\tilde M_{N + k}- M_{N + k}| |D(x, \la, \tilde \la_{N + k})| \\ & + |M_{N + k}| |D(x, \la, \tilde \la_{N + k}) - D(x, \la, \la_{N + k})| + \xi_{N + k} |M_{N + k}| |D(x, \la, \la_{N + k}|) \\ & \le C \sum_{n = N + 1}^{\iy} \frac{\xi_n}{|\rho - n| + 1} \le C \left(  \sum_{n = N + 1}^{\iy} (n \xi_n)^2 \right)^{1/2} \le C \de.
\end{align*}
One can similarly study the components $R_{DC}(x)$ and $R_{DD}(x)$ and finally arrive at the assertion of the lemma.
\end{proof}

\begin{cor} \label{cor:uniq}
There exists $\de_0 > 0$ such that, for every $\de \le \de_0$ and $x \in [0, \pi]$, the estimate $\| R(x) \|_{B \to B} \le \frac{1}{2}$ holds. In this case, for each fixed $x \in [0, \pi]$, the operator $(I + R(x))$ has a bounded inverse, and the main equation~\eqref{main} has a unique solution.
\end{cor}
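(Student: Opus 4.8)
The plan is to prove Corollary~\ref{cor:uniq} as a direct consequence of the Lemma together with the standard Neumann-series argument for bounded operators on a Banach space. First I would note that the Lemma provides a constant $C_0 > 0$ (independent of $x$, $\de$, and the choice of $\tilde G$) with $\| R(x) \|_{B \to B} \le C_0 \de$ for all $x \in [0, \pi]$. Then I would shrink $\de_0$ if necessary so that $C_0 \de_0 \le \tfrac12$; this is legitimate because all the smallness requirements on $\de_0$ imposed so far (ensuring $\{\tilde\la_n\}_{n=0}^N$ lie inside $\ga_N$ and $\{\tilde\la_n\}_{n>N}$ lie outside) are satisfied by any smaller $\de_0$ as well. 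With this choice, $\| R(x) \|_{B \to B} \le C_0 \de \le C_0 \de_0 \le \tfrac12$ for every $\de \le \de_0$ and every $x \in [0,\pi]$.

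Next I would invoke the classical fact that if $A$ is a bounded linear operator on a Banach space with $\|A\| < 1$, then $I + A$ is invertible with $(I+A)^{-1} = \sum_{k=0}^{\iy} (-A)^k$ and $\|(I+A)^{-1}\| \le (1 - \|A\|)^{-1} \le 2$ in our case. Applying this with $A = R(x)$ for each fixed $x \in [0,\pi]$ shows that $(I + R(x)) \colon B \to B$ has a bounded inverse. Consequently the main equation~\eqref{main}, namely $\psi(x) = (I + R(x))\tilde\psi(x)$, determines $\tilde\psi(x)$ uniquely as $\tilde\psi(x) = (I + R(x))^{-1}\psi(x)$ — but here one must be careful about the logical direction: what~\eqref{main} asserts is a relation between the element $\psi(x) \in B$ built from $L$ and the element $\tilde\psi(x) \in B$ built from $\tilde G$. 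Since we are given $\tilde G$ but do not yet know whether it corresponds to any BVP $\tilde L$, the correct reading is that~\eqref{main}, viewed as an equation with unknown $\tilde\psi(x) \in B$ and with $R(x)$ and $\psi(x)$ constructed from the known data $L$ and $\tilde G$, has exactly one solution in $B$; uniqueness and existence both follow from invertibility of $I + R(x)$.

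I do not expect a genuine obstacle here, since the Lemma does all the real work and the rest is the textbook Neumann-series estimate; the only point requiring a little care is the uniformity in $x$, which is already built into the statement of the Lemma (the constant $C$ there is independent of $x$), so a single choice of $\de_0$ works simultaneously for all $x \in [0,\pi]$.

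\begin{proof}
By the Lemma, there is a constant $C_0 > 0$, independent of $x \in [0,\pi]$, of $\de$, and of the choice of $\tilde G$ satisfying the conditions of Theorem~\ref{thm:loc}, such that $\| R(x) \|_{B \to B} \le C_0 \de$. Decrease $\de_0 > 0$, if necessary, so that, in addition to the previously imposed smallness conditions, one has $C_0 \de_0 \le \tfrac12$. Then for every $\de \le \de_0$ and every $x \in [0,\pi]$ we get
$$
\| R(x) \|_{B \to B} \le C_0 \de \le C_0 \de_0 \le \tfrac12 .
$$
In particular $\| R(x) \|_{B \to B} < 1$, so the Neumann series $\sum_{k = 0}^{\iy} (-R(x))^k$ converges in the operator norm on $B$ and defines a bounded inverse of $I + R(x)$, with
$$
\| (I + R(x))^{-1} \|_{B \to B} \le \frac{1}{1 - \| R(x) \|_{B \to B}} \le 2 .
$$
Hence, for each fixed $x \in [0,\pi]$, the main equation~\eqref{main}, regarded as an equation in $B$ with the right-hand side and the operator $R(x)$ constructed from $L$ and $\tilde G$, has the unique solution $\tilde\psi(x) = (I + R(x))^{-1} \psi(x)$.
\end{proof}
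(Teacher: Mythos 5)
Your proof is correct and is exactly the intended argument: the paper derives the corollary directly from the Lemma's bound $\| R(x) \|_{B \to B} \le C \de$ by shrinking $\de_0$ and applying the standard Neumann-series inversion. No gaps; your remark on the logical direction (solving for $\tilde\psi(x)$ from data $L$ and $\tilde G$ without presupposing a problem $\tilde L$) matches the paper's reading of the main equation.
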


\section{Proofs}

The aim of this section is to prove Theorems~\ref{thm:loc} and~\ref{thm:disc}.
Using the solution of the main equation~\eqref{main}, we construct the values $\tilde q(x)$, $\tilde h$ and $\tilde H$, being the solution of Inverse Problem~\ref{ip:M} for $\tilde G$.
Furthermore, stability estimates~\eqref{stab} are proved. We just outline the general strategy, since the detailed proofs are analogous to \cite[Section~1.6.2]{FY01}.

Let $L$ be a fixed BVP, and let $\de_0$ satisfy the conditions of Corollary~\ref{cor:uniq}. Let $\tilde G = \{ \tilde \la_n, \tilde M_n \}_{n = 0}^{\iy}$ be arbitrary fixed data, satisfying the conditions of Theorem~\ref{thm:loc}. Then the main equation~\eqref{main} constructed by $L$ and $\tilde G$ has a unique solution for each fixed $x \in [0, \pi]$. Denote this solution by $\tilde \psi(x) = (\tilde \psi_C(x), \tilde \psi_D(x))$, $\tilde \psi_C(x) = \tilde \psi_C(x, \la)$, $\tilde \psi_D(x) = [\tilde \psi_n(x)]_{n = 1}^{\iy}$.

\begin{lem} \label{lem:psi}
The functions $\tilde \psi_C(x, \la)$ for each fixed $\la \in \ga_N$ and $\tilde \psi_n(x)$ for $n \ge 1$ are continuously differentiable with respect to $x \in [0, \pi]$. Moreover, the following estimates hold for $x \in [0, \pi]$, $\nu = 0, 1$:
\begin{gather*}
|\tilde \psi^{(\nu)}(x, \la)| \le C, \quad 
|\tilde \psi^{(\nu)}_C(x, \la) - \psi^{(\nu)}_C(x, \la)| \le C \de, \quad \la \in \ga_N, \\
|\tilde \psi^{(\nu)}_n(x)| \le C n^{\nu}, \quad
|\tilde \psi_n(x) - \psi_n(x)| \le C \de \eta_n, \quad
|\tilde \psi'_n(x) - \psi'_n(x)| \le C \de, \quad n \ge 1, \\
\eta_n := \left( \sum_{k = 1}^{\iy} \frac{1}{k^2 (|n - k| + 1)^2}\right)^{1/2},
\end{gather*}
where the constant $C$ depends only on $L$ and $\de_0$.
\end{lem}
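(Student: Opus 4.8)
The plan is to differentiate the main equation \eqref{main} with respect to $x$ and set up a companion linear equation for the derivatives, which can then be solved by the same invertibility argument from Corollary~\ref{cor:uniq}. First I would note that the kernel $D(x,\la,\xi)=\int_0^x\vv(t,\la)\vv(t,\xi)\,dt$ is continuously differentiable in $x$ with $\partial_x D(x,\la,\xi)=\vv(x,\la)\vv(x,\xi)$, so all the components \eqref{RCC}--\eqref{RCD} of $R(x)$ — and similarly $R_{DC}$, $R_{DD}$ — depend continuously differentiably on $x$, with $R'(x)$ given by replacing each $D$ by the corresponding product of $\vv$'s. Since $\tilde\psi(x)=(I+R(x))^{-1}\psi(x)$ and both $\psi(x)$ and $R(x)$ are $C^1$ in $x$ (the former by \eqref{asymptphi} and differentiation under the integral/series), the standard fact that $x\mapsto(I+R(x))^{-1}$ is $C^1$ whenever $\|R(x)\|\le\frac12$ uniformly gives continuous differentiability of $\tilde\psi(x)$, and the formula $\tilde\psi'(x)=(I+R(x))^{-1}\bigl(\psi'(x)-R'(x)\tilde\psi(x)\bigr)$.

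Next I would establish the a priori bounds $|\tilde\psi^{(\nu)}(x,\la)|\le C$, $|\tilde\psi_n^{(\nu)}(x)|\le Cn^\nu$. For $\nu=0$ these follow from $\|\tilde\psi(x)\|_B\le 2\|\psi(x)\|_B\le C$ together with the already-noted estimates on $\vv$; the factor $n^{\nu}$ in the discrete part for $\nu=1$ reflects that $\psi'_{2j-1}(x)=-\rho\sin\rho x+\dots$ grows linearly in $n\sim\rho$, so one must first check $\|R'(x)\|_{B\to B}\le C$ with the discrete output scaled by $1/n$ — i.e. work in the weighted space where the discrete sequence is $[f_n/n]$ — or equivalently verify directly that each term of $R'(x)\tilde\psi(x)$ in the $2j-1$ and $2j$ slots is $O(n)$ using $|\partial_x D|$-type estimates and \eqref{estxi}. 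Then $|\tilde\psi_n'(x)|\le|\psi_n'(x)|+|(R'(x)\tilde\psi(x))_n|+|(R(x)\tilde\psi'(x))_n|\le Cn$ closes by a bootstrap.

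For the closeness estimates I would subtract the main equations for $\tilde\psi$ and for the analogue built from $L$ against $G$ itself (for which the solution is exactly $\psi$, since then $\hat{\mathcal M}_N\equiv0$, $\hat M\equiv 0$ and $R=0$ — wait, more carefully: when $\tilde G=G$ the operator $R(x)$ vanishes and $\tilde\psi=\psi$). Thus $\tilde\psi(x)-\psi(x)=(I+R(x))^{-1}\bigl((\psi(x)-\psi(x))+(\text{terms from }R(x)\psi(x))\bigr)$; more precisely $(I+R(x))(\tilde\psi(x)-\psi(x))=-R(x)\psi(x)$, whence $\|\tilde\psi(x)-\psi(x)\|_B\le 2\|R(x)\psi(x)\|_B\le C\de$ by Lemma on $\|R(x)\|$ and $\|\psi(x)\|_B\le C$. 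This gives the uniform $C\de$ bounds for the continuous part and $\sup_n|\tilde\psi_n-\psi_n|\le C\de$; to upgrade the discrete estimate to $|\tilde\psi_n(x)-\psi_n(x)|\le C\de\,\eta_n$ one examines the $n$-th component of $R(x)\psi(x)$ directly: the $R_{CD},R_{DD}$-type sums acting on $\psi_D$ produce, after the $|D(x,\cdot,\tilde\la_m)-D(x,\cdot,\la_m)|\le C\exp(|\operatorname{Im}\rho|x)\xi_m/(|\rho-m|+1)$ bound and \eqref{estxi}, exactly a factor $\bigl(\sum_m\xi_m^2/(|n-m|+1)^2\bigr)^{1/2}\le\de\eta_n$ by Cauchy--Schwarz (after pulling out $n$ via \eqref{estxi}). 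Differentiating and repeating with $\partial_xD$ (which loses the $1/(|\rho-m|+1)$ gain) gives the unweighted $|\tilde\psi_n'(x)-\psi_n'(x)|\le C\de$.

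The main obstacle is the discrete derivative bookkeeping: the operators $R_{DC},R_{DD}$ already carry the factors $\chi_{N+j}$ and the differences $D(x,\la_{N+j},\cdot)-D(x,\tilde\la_{N+j},\cdot)$, and after differentiating in $x$ one must track how $\partial_x$ interacts with these finite-difference-in-$\la$ structures while keeping the output in $B_D$ (boundedness) rather than a weighted space; establishing that $R'(x)$ maps $B$ into the space of sequences that are $O(n)$ — and that the $\eta_n$-weighted estimate survives differentiation only in the weakened form stated — requires carefully separating the terms where the $x$-derivative hits $D$ from those where it does not, and invoking \eqref{asymptla} to control $\rho_n\sim n$. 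Everything else is a routine adaptation of \cite[Section~1.6.2]{FY01}.
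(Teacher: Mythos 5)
Your proposal is correct and is essentially the proof the paper intends: the paper itself omits the argument for this lemma, deferring to the standard technique of \cite[Section~1.6.2]{FY01}, and your route --- differentiating the main equation, using $\tilde\psi'=(I+R)^{-1}(\psi'-R'\tilde\psi)$ with $\partial_x D(x,\la,\xi)=\vv(x,\la)\vv(x,\xi)$, and extracting the $\eta_n$-weight from the identity $(I+R)(\tilde\psi-\psi)=-R\psi$ via the estimate $|D(x,\cdot,\tilde\la_m)-D(x,\cdot,\la_m)|\le C\xi_m/(|\rho-m|+1)$ and Cauchy--Schwarz --- is exactly that technique adapted to the mixed continuous/discrete space $B$. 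Your explicit flagging of the $O(n)$ growth of the discrete derivatives (forcing the weighted/componentwise treatment) and of the loss of the $1/(|\rho-m|+1)$ gain after differentiation is precisely the bookkeeping the reference handles.
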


Define the function
\begin{align*}
\tilde \vv(x, \la) & := \vv(x, \la) - \frac{1}{2 \pi i} \int_{\ga_N} D(x, \la, \xi) \hat {\mathcal M}_N(\xi) \tilde \psi_C(x, \xi) \, d\xi \\ & - \sum_{k = 1}^{\iy} (\tilde M_{N + k} D(x, \la, \tilde \la_{N + k}) \tilde \psi_{2k-1}(x) - M_{N + k} D(x, \la, \la_{N + k}) (\tilde \psi_{2k-1}(x) + \xi_{N + k} \tilde \psi_{2k}(x))).
\end{align*}
It is easy to check that
\begin{gather*}
\tilde \vv(x, \la) = \tilde \psi_C(x, \la), \quad \la \in \ga_N, \\
\tilde \vv(x, \tilde \la_{N + k}) = \tilde \psi_{2k-1}(x), \quad
\tilde \vv(x, \la_{N + k}) = \tilde \psi_{2k-1}(x) + \xi_{N + k} \tilde \psi_{2k}(x), \quad k \ge 1.
\end{gather*}
Consequently, Lemma~\ref{lem:psi} implies
\begin{equation} \label{estphi}
\def\arraystretch{1.5}
\left.
\begin{array}{l}
    |\tilde \vv^{(\nu)}(x, \la)| \le C, \quad |\tilde \vv^{(\nu)}(x, \la) - \vv^{(\nu)}(x, \la)| \le C \de, \quad \la \in \ga_N, \\
    |\tilde \vv^{(\nu)}(x, \la_{N + k})| \le C k^{\nu}, \quad
    |\tilde \vv(x, \la_{N + k}) - \vv(x, \la_{N + k})| \le C \de \eta_k, \\
    |\tilde \vv'(x, \la_{N + k}) - \vv'(x, \la_{N + k})| \le C \de, \quad k \ge 1, \quad
    \nu = 0, 1, \quad x \in [0, \pi].
\end{array} \right\}   
\end{equation}
The similar estimates also hold for $\la_{N + k}$ replaced by $\tilde \la_{N + k}$.

Introduce the functions
\begin{align} \nonumber
    \eps_0(x) & := \frac{1}{2 \pi i} \int_{\ga_N} \hat {\mathcal M}_N(\xi) \vv(x, \xi) \tilde \vv(x, \xi) \, d\xi + \sum_{n = N + 1}^{\iy} (\tilde M_n \vv(x, \tilde \la_n) \tilde \vv(x, \tilde \la_n) \\ \label{defeps} & - M_n \vv(x, \la_n) \tilde \vv(x, \la_n)), \quad
    \eps(x) := -2 \eps_0'(x).
\end{align}

Using~\eqref{estM}, \eqref{estxi}, \eqref{estMn}, \eqref{estphi} and \eqref{defeps}, we prove the following lemma.

\begin{lem} \label{lem:eps}
The integral $\int_{\ga_N}(\dots) \, d\xi$ and the series $\sum\limits_{n = N + 1}^{\iy}(\dots)$ in \eqref{defeps} converge absolutely and uniformly with respect to $x \in [0, \pi]$. The function $\eps_0$ is absolutely continuous on $[0, \pi]$, and $\eps \in L_2(0, \pi)$. In addition, 
$$
    \max_{x \in [0, \pi]} |\eps_0(x)| \le C \de, \quad \| \eps \|_{L_2(0, \pi)} \le C \de.
$$
\end{lem}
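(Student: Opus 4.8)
The plan is to treat the integral term and the series in~\eqref{defeps} separately, estimate each summand by the bounds already collected in Section~3 and in~\eqref{estphi}, and then differentiate under the integral/sum to control $\eps = -2\eps_0'$ in $L_2$. For the integral term $\frac{1}{2\pi i}\int_{\ga_N}\hat{\mathcal M}_N(\xi)\vv(x,\xi)\tilde\vv(x,\xi)\,d\xi$, the contour $\ga_N$ is compact, $\hat{\mathcal M}_N$ is continuous on it with $\max_{\xi\in\ga_N}|\hat{\mathcal M}_N(\xi)|\le\de$ by~\eqref{estM}, and both $\vv(x,\xi)$ and $\tilde\vv(x,\xi)$ (together with their $x$-derivatives) are bounded uniformly in $x\in[0,\pi]$ and $\xi\in\ga_N$ by~\eqref{estphi}. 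Hence the integrand and its $x$-derivative are bounded by $C\de$ times a fixed integrable function, which gives absolute and uniform convergence, continuous differentiability in $x$, and the bound $C\de$ for this piece of $\eps_0$ and of $\eps_0'$.

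The series is the substantive part. I would write each summand as $\tilde M_n\vv(x,\tilde\la_n)\tilde\vv(x,\tilde\la_n)-M_n\vv(x,\la_n)\tilde\vv(x,\la_n)$ and split it, in the usual telescoping fashion, into three differences: one carrying the factor $\tilde M_n-M_n$, one carrying $\vv(x,\tilde\la_n)-\vv(x,\la_n)$, and one carrying $\tilde\vv(x,\tilde\la_n)-\tilde\vv(x,\la_n)$. By~\eqref{estMn} the first difference is $O(\xi_n)$; by the estimates stated just after~\eqref{asymptphi} and in~\eqref{estphi}, the other two differences are $O(\xi_n)$ as well, while the remaining factors in each term are uniformly bounded. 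Therefore the general term of the series is dominated by $C\xi_n$, and since $\bigl(\sum_{n>N}(n\xi_n)^2\bigr)^{1/2}\le\de$ forces in particular $\sum_{n>N}\xi_n\le C\de$, the series converges absolutely and uniformly in $x$ with sum bounded by $C\de$. This establishes the stated bound on $\max_x|\eps_0(x)|$ and the absolute continuity of $\eps_0$ once the term-by-term differentiation is justified.

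For $\eps=-2\eps_0'$ I would differentiate the series in $x$ termwise. Now the key point is that $\vv'(x,\la_n)$ grows like $|\rho_n|\sim n$, so a crude bound on the differentiated $n$-th term is only $O(n\xi_n)$; this is \emph{not} summable, but it \emph{is} square-summable, since $\bigl(\sum_{n>N}(n\xi_n)^2\bigr)^{1/2}\le\de$. Thus one does not get uniform convergence of the differentiated series, but one does get convergence in $L_2(0,\pi)$: writing $\eps_0'(x)$ as a sum $\sum_n u_n(x)$ with $\|u_n\|_{L_2(0,\pi)}\le C n\xi_n$ (plus the $O(\de)$ contribution from the integral term), the partial sums form a Cauchy sequence in $L_2(0,\pi)$, their limit is the weak derivative of $\eps_0$, and $\|\eps\|_{L_2(0,\pi)}\le C\bigl(\sum_{n>N}(n\xi_n)^2\bigr)^{1/2}+C\de\le C\de$ by the triangle inequality together with~\eqref{estxi}. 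To make the telescoping bounds on the differentiated terms fully rigorous one also needs $|\vv'(x,\la_n)|\le C n$ and $|\vv'(x,\tilde\la_n)-\vv'(x,\la_n)|\le C\xi_n$, which follow from differentiating~\eqref{asymptphi} in $x$ and from the standard estimates of \cite[Lemma~1.6.2]{FY01}. The main obstacle is precisely this last step: one must pass from the $\ell^1$-type bound available at the level of $\eps_0$ to the $\ell^2$-type bound forced by the linear growth of $\vv'$, and verify that term-by-term differentiation is legitimate in the $L_2$ sense rather than pointwise — everything else is a routine application of the already-established estimates.
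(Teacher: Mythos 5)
Your treatment of the integral term and of the undifferentiated series is correct and follows exactly the route the paper intends (the paper itself only points to the standard technique of \cite[Section~1.6.2]{FY01} and the estimates \eqref{estM}, \eqref{estxi}, \eqref{estMn}, \eqref{estphi}): the contour term is trivially $O(\de)$ together with its $x$-derivative, and telescoping each summand of the series into three differences carrying factors $\tilde M_n - M_n$, $\vv(x,\tilde\la_n)-\vv(x,\la_n)$, $\tilde\vv(x,\tilde\la_n)-\tilde\vv(x,\la_n)$, each of size $O(\xi_n)$, combined with $\sum_{n>N}\xi_n \le \bigl(\sum_{n>N}(n\xi_n)^2\bigr)^{1/2}\bigl(\sum_{n>N}n^{-2}\bigr)^{1/2} \le C\de$, gives $\max_x|\eps_0(x)|\le C\de$.

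The gap is in the $L_2$ estimate for $\eps=-2\eps_0'$. You correctly observe that the differentiated $n$-th term $u_n$ admits only the bound $\|u_n\|_{L_2(0,\pi)}\le Cn\xi_n$, but you then assert that square-summability of these norms makes the partial sums Cauchy in $L_2$ with $\bigl\|\sum_n u_n\bigr\|_{L_2}\le C\bigl(\sum_n(n\xi_n)^2\bigr)^{1/2}$. That implication is false in general: with $u_n=a_nf$ for a fixed $f\ne 0$ and $a_n=1/n$ the norms are square-summable while the series diverges. What actually rescues the argument --- and what the technique of \cite[Section~1.6.2]{FY01} relies on --- is the almost-orthogonality of the leading parts of the $u_n$. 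From \eqref{asymptphi} and its $x$-derivative one has $\tfrac{d}{dx}\bigl(\vv(x,\la_n)\tilde\vv(x,\la_n)\bigr)=-\rho_n\sin 2\rho_n x+O(1)=-n\sin 2nx+O(1)$ uniformly in $x$ (and similarly at $\tilde\la_n$), so after telescoping each $u_n$ splits into an oscillatory part $c_n\sin 2nx$ with $|c_n|\le Cn\xi_n$ plus a remainder that is $O(\xi_n)$ uniformly in $x$. The remainders are absolutely summable by the first part of the argument, while the oscillatory parts form an orthogonal system in $L_2(0,\pi)$, whence $\bigl\|\sum_n c_n\sin 2nx\bigr\|_{L_2}^2\le C\sum_n|c_n|^2\le C\de^2$ by \eqref{estxi}. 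Without extracting this trigonometric structure the passage from the $\ell^2$ coefficient bound to the $L_2$ function bound --- which you yourself flag as the main obstacle --- is not justified. Everything else in your proposal is sound.
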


Define 
$$
\tilde q(x) = q(x) + \eps(x), \quad \tilde h = h - \eps_0(0), \quad \tilde H = H + \eps_0(\pi).
$$

Lemma~\ref{lem:eps} implies that $\tilde q \in L_2(0, \pi)$, and the estimates~\eqref{stab} hold. Consider the BVP $\tilde L = L(\tilde q(x), \tilde h, \tilde H)$ and the function
$$
\tilde M(\la) := \sum_{n \in \tilde S} \sum_{\nu = 0}^{\tilde m_n - 1} \frac{\tilde M_{n + \nu}}{(\la - \tilde \la_n)^{\nu + 1}}.
$$

\begin{lem} \label{lem:Weyl}
The function $\tilde M(\la)$ is the Weyl function of $\tilde L$.
\end{lem}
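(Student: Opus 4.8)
The plan is to verify that the function $\tilde\vv(x,\la)$ constructed above (from the solution of the main equation) is actually the solution of equation~\eqref{eqv} with potential $\tilde q$ satisfying the initial conditions $\tilde\vv(0,\la)=1$, $\tilde\vv'(0,\la)=\tilde h$, and then to identify $\tilde M(\la)$ as the Weyl function by analyzing the corresponding Weyl solution $\tilde\Phi(x,\la)$. First I would establish the differential relation for $\tilde\vv$. Differentiating the defining formula for $\tilde\vv(x,\la)$ twice in $x$ and using the identity $D(x,\la,\xi)=\int_0^x\vv(t,\la)\vv(t,\xi)\,dt$ together with the equation $-\vv''+q\vv=\la\vv$, one obtains, after the standard computation of \cite[Section~1.6.2]{FY01}, the identity $-\tilde\vv''+(q+\eps)\tilde\vv=\la\tilde\vv$, where $\eps=-2\eps_0'$ appears precisely from the terms produced by differentiating the kernel $D$ under the integral and the sum. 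The estimates of Lemma~\ref{lem:psi} and Lemma~\ref{lem:eps} guarantee that all the termwise differentiations and interchanges of summation/integration with differentiation are legitimate and that the resulting series and integral converge absolutely and uniformly on $[0,\pi]$. Evaluating the formula for $\tilde\vv$ and its $x$-derivative at $x=0$, where $D(0,\la,\xi)\equiv 0$, gives $\tilde\vv(0,\la)=\vv(0,\la)=1$ and $\tilde\vv'(0,\la)=\vv'(0,\la)-2\eps_0(0)\cdot(\tfrac12)=h-\eps_0(0)=\tilde h$; hence $\tilde\vv(x,\la)=\vv(x,\la,\tilde L)$, the $\vv$-type solution for $\tilde L$.

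Next I would handle the right endpoint and the Weyl function. By the symmetric construction (the standard duality of the method of spectral mappings), one introduces the analogue of $D$ built from $\Phi$ and, running the contour-integration argument of Section~3 in the reverse direction, derives a representation of $\Phi(x,\la)$ in terms of $\tilde\Phi(x,\la)$ and the same data $\hat{\mathcal M}_N$, $\{\la_n,M_n\}$, $\{\tilde\la_n,\tilde M_n\}$. This yields an expression for $\tilde M(\la)=\tilde\Phi(0,\la)$, and a direct computation shows it coincides with the prescribed meromorphic function $\sum_{n\in\tilde S}\sum_{\nu}\tilde M_{n+\nu}(\la-\tilde\la_n)^{-\nu-1}$. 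Alternatively, and perhaps more cleanly, I would argue as follows: the main equation forces the relation~\eqref{cont} to hold with $\tilde\vv$ in place of $\vv$ on the left and the right, i.e.\ $\vv(x,\la)=\tilde\vv(x,\la)+\frac{1}{2\pi i}\int_{\ga_N}D(x,\la,\xi)\hat{\mathcal M}_N(\xi)\tilde\vv(x,\xi)\,d\xi+\sum_{n>N}(\dots)$, and then plug $\la=\tilde\la_n$ for $n\le N$ and $\la=\la_n$, $\la=\tilde\la_n$ for $n>N$ to recover the full set of relations that characterize the spectral data of $\tilde L$. The key point is that the transformation operator relating $\vv(x,\la)$ and $\tilde\vv(x,\la)$, together with the known Weyl function $M(\la)$ of $L$, reproduces exactly the poles $\tilde\la_n$ and residues/principal parts $\tilde M_{n+\nu}$; since the Weyl function is uniquely determined by the operator, $\tilde M(\la)$ must be the Weyl function of $\tilde L$.

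The main obstacle I expect is the bookkeeping at the finitely many ``bad'' indices $n\in S_N$ (equivalently, inside $\ga_N$), where multiple eigenvalues $\la_n$ of $L$ may split into simple eigenvalues $\tilde\la_n$ of $\tilde L$ and the meromorphic function $\tilde{\mathcal M}_N$ has a different pole structure than ${\mathcal M}_N$. One must check that the contour integral over $\ga_N$ together with the Residue Theorem correctly encodes $\tilde{\mathcal M}_N-{\mathcal M}_N=\hat{\mathcal M}_N$ regardless of how the poles are distributed inside $\ga_N$; this is where conditions~\eqref{estM} (or~\eqref{estMla}) are used to guarantee that the principal part of $\tilde M(\la)$ at each $\tilde\la_n$ inside $\ga_N$ is the prescribed one, and that no spurious singularities are created. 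Away from $\ga_N$, i.e.\ for $n>N$, the argument is the routine one from \cite[Lemma~1.6.5]{FY01} and \cite{But07}, since there all eigenvalues are simple and the asymptotics~\eqref{asymptla} control the tails via the estimates already established in Lemmas~\ref{lem:psi} and~\ref{lem:eps}. Once $\tilde M$ is identified as the Weyl function of $\tilde L$, the uniqueness of the inverse problem by the Weyl function gives that $\tilde G$ are exactly the generalized spectral data of $\tilde L$, completing the proof.
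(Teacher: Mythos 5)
Your proposal follows essentially the same route the paper intends: Lemma~\ref{lem:Weyl} is stated there without proof, with an explicit deferral to the standard method-of-spectral-mappings argument of \cite[Section~1.6.2]{FY01} and \cite{But07} --- show that $\tilde\vv$ solves equation~\eqref{eqv} with the constructed $\tilde q$, $\tilde h$, build the Weyl solution $\tilde\Phi$ by the dual contour-integration representation, and read off the poles and principal parts. Your observation that the only genuinely new bookkeeping is the possible splitting of poles inside $\ga_N$, absorbed into $\hat{\mathcal M}_N$ and controlled by~\eqref{estM}, correctly identifies the one place where this paper's setting departs from the classical simple-eigenvalue case.
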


Thus, Corollary~\ref{cor:uniq} and Lemmas~\ref{lem:psi}-\ref{lem:Weyl} together prove Theorem~\ref{thm:loc}.

\begin{proof}[Proof of Theorem~\ref{thm:disc}]
We prove Theorem~\ref{thm:disc} by reduction to Theorem~\ref{thm:loc}.
For simplicity, suppose that the problem $L$ has the only multiple eigenvalue $\la_0$ of multiplicity $m := m_0$. The general case is completely similar, but more technically complicated. In our special case, we have
\begin{equation} \label{sm1}
\hat {\mathcal M}_N(\la) = \sum_{j = 0}^{m - 1} \frac{\tilde M_j}{\la - \tilde \la_j} - \sum_{j = 0}^{m-1} \frac{M_j}{(\la - \la_0)^{j + 1}}.
\end{equation}
Using the obvious relation
$$
\frac{1}{\la - \tilde \la_j} = \frac{1}{\la - \la_0} \left( 1 + \frac{\tilde \la_j - \la_0}{\la - \tilde \la_j}\right),
$$
we obtain
\begin{equation} \label{sm2}
\frac{1}{\la - \tilde \la_j} = \sum_{s = 0}^{2 (m - 1)} \frac{(\tilde \la_j - \la_0)^s }{(\la - \la_0)^{s + 1}} + \frac{(\tilde \la_j - \la_0)^{2 m - 1}}{(\la - \la_0)^{2 m - 1}(\la - \tilde \la_j)}.
\end{equation}
Substituting~\eqref{sm2} into~\eqref{sm1}, we derive
\begin{align*}
\hat {\mathcal M}_N(\la) & = \sum_{s = 0}^{m - 1} \frac{1}{(\la - \la_0)^{s + 1}} \left( \sum\limits_{j = 0}^{m - 1} (\tilde \la_j - \la_0)^s \tilde M_j - M_s \right) \\ & + \sum_{s = m}^{2 (m - 1)} \frac{1}{(\la - \la_0)^{s + 1}} \left( \sum\limits_{j = 0}^{m - 1} (\tilde \la_j - \la_0)^s \tilde M_j \right) + \sum_{j = 0}^{m-1} \frac{(\tilde \la_j - \la_0)^{2m-1} \tilde M_j}{(\la - \la_0)^{2m-1} (\la - \tilde \la_j)}.
\end{align*}

For an appropriate choice of $\de_0$, we have $|\la - \tilde \la_j| \ge c_0$, $j = \overline{0, m-1}$, $|\la - \la_0| \ge c_0$ for $\la \in \ga_N$, where the constant $c_0 > 0$ depends only on $L$ and $\de_0$. Therefore, taking the estimates~\eqref{estMla} into account, we conclude that 
$$
\max_{\la \in \ga_N} |\hat {\mathcal M}_N(\la)| \le C \de.
$$
Using the latter relation together with the other conditions of Theorem~\ref{thm:disc}, we apply Theorem~\ref{thm:loc} and arrive at the assertion of Theorem~\ref{thm:disc}.
\end{proof}

\begin{example}
Suppose that $m_0 = 2$, $m_n = 1$ for $n \ge 2$, i.e. $N = 1$. Let us construct a special family of data $\tilde G = \{ \tilde \la_n, \tilde M_n \}$ ``close'' to $G$ in the sense of Theorem~\ref{thm:disc}. For simplicity, put $\tilde \la_n := \la_n$, $\tilde M_n := M_n$ for $n \ge 2$, i.e. only the first double eigenvalue $\la_0$ can be perturbed. In this case, the condition~\eqref{estxi} holds automatically for any $\de > 0$, and the conditions~\eqref{estMla} take the form
\begin{gather} \label{ex1}
    |\tilde M_0 + \tilde M_1 - M_0| \le \de, \\ \label{ex2}
    |\tilde M_0(\tilde \la_0 - \la_0) + \tilde M_1(\tilde \la_1 - \la_0) - M_1| \le \de, \\ \nonumber
    |\tilde M_0(\tilde \la_0 - \la_0)^2 + \tilde M_1 (\tilde \la_1 - \la_0)^2| \le \de, \\ \nonumber
    |\tilde \la_j - \la_0| \le \sqrt{\de}, \quad |\tilde M_j| \le \frac{1}{\sqrt{\de}}, \quad j = 0, 1.
\end{gather}

Fix $\de > 0$ and put
$$
\tilde M_0 := \frac{a}{\sqrt{\de}} + M_0, \quad \tilde M_1 := -\frac{a}{\sqrt{\de}}, \quad
\tilde \la_0 := \la_0 + \sqrt{\de}, \quad \tilde \la_1 := \la_0 - \sqrt{\de} + c \de, \quad
a := \frac{M_1}{2}, \quad c := \frac{M_0}{a}.
$$
One can easily check that
$$
\tilde M_0 (\tilde \la_0 - \la_0)^{\nu} + \tilde M_1 (\tilde \la_1 - \la_0)^{\nu} = M_{\nu}, \quad \nu = 0, 1, 
$$
which implies~\eqref{ex1} and~\eqref{ex2}. It can be also checked that
\begin{equation} \label{ex3}
|\tilde M_0(\tilde \la_0 - \la_0)^2 + \tilde M_1 (\tilde \la_1 - \la_0)^2| \le C \de, \quad
|\tilde \la_j - \la_0| \le C \sqrt{\de}, \quad |\tilde M_j| \le \frac{C}{\sqrt{\de}}, \quad j = 0, 1,
\end{equation}
for $\de \in (0, \de_0]$, where $\de_0$ is sufficiently small, and the constant $C$ depends only on $L$ and $\de_0$. Despite the constant $C$ in \eqref{ex3}, we can apply Theorem~\ref{thm:disc} and conclude that, for sufficiently small $\de_0 > 0$ and $\de \in (0, \de_0]$, there exists the solution $(\tilde q, \tilde h, \tilde H)$ of Inverse Problem~\ref{ip:M} for the data $\tilde G$, and the estimates~\eqref{stab} hold.

An interesting feature of this example is that the eigenvalues $\tilde \la_0$, $\tilde \la_1$ are $\sqrt{\de}$-close to $\la_0$ and the generalized weight numbers $\tilde M_0$, $\tilde M_1$ are sufficiently large for sufficiently small $\de$. Nevertheless, the potential $\tilde q$ is $C \de$-close to $q$ in $L_2$-norm. Analogous examples can be constructed for $m_0 > 2$ and eigenvalues $\{ \tilde \la_n \}_{n = 0}^{m_0-1}$, being $\de^{1/m_0}$-close to $\la_0$.
\end{example}

\section{Case of Dirichlet Boundary Conditions}

In this section, we formulate the results similar to Theorems~\ref{thm:loc} and~\ref{thm:disc} for the case of the Dirichlet boundary conditions. Since the proofs for different types of boundary conditions are quite similar, we provide only formulations in this section.  

Consider the boundary problem $L_0 = L_0(q(x))$ for equation~\eqref{eqv} with the complex-valued potential $q \in L_2(0, \pi)$ and the Dirichlet boundary conditions
\begin{equation} \label{dbc}
    y(0) = y(\pi) = 0.
\end{equation}
Denote by $\{ \la_n \}_{n = 1}^{\iy}$ the eigenvalues of $L$ counted with their multiplicities and numbered so that $|\la_n| \le |\la_{n + 1}|$, $n \ge 1$. Equal eigenvalues are consequtive.
% Here and below we use the same notations for objects corresponding to the problems $L$ and $L_0$ in order to emphasize the similarity of these objects. Rigorously speaking, the eigenvalues $\{ \la_n \}_{n = 0}^{\iy}$ defined in Section~1 and $\{ \la_n \}_{n = 1}^{\iy}$ defined in this section are different.

The eigenvalue multiplicities $\{ m_n \}_{n = 1}^{\iy}$ for the problem $L_0$ are introduced similarly to the case of the Robin boundary conditions~\eqref{bc}. Note that the asymptotic formula \eqref{asymptla} is valid for $n \ge 1$, so we choose the minimal $N = N(L_0)$ such that $m_n = 1$ for $n > N$ and $|\la_N| < |\la_{N + 1}|$. Fix the contour $\ga_N = \{ \la \in \mathbb C \colon |\la| = r\}$, $r \in (|\la_N|, |\la_{N + 1}|)$.

Let $\Phi(x, \la)$ be the solution of equation~\eqref{eqv} under the conditions $\Phi(0, \la) = 1$, $\Phi(\pi, \la) = 0$. The Weyl functions is defined as $M(\la) := \Phi'(0, \la)$. Define
\begin{gather*}
M_{n + \nu} := \Res_{\la = \la_n} (\la - \la_n)^{\nu} M(\la), \quad n \in S, \quad \nu = \overline{0, m_n-1}, \\
S := \{ n \in \mathbb N \colon n = 1 \, \text{or} \, \la_n \ne \la_{n-1} \}.
\end{gather*}

The following inverse problem is analogous to Inverse Problem~\ref{ip:M}.

\begin{ip} \label{ip:MD}
Given the data $G := \{ \la_n, M_n \}_{n = 1}^{\iy}$ for the problem $L_0(q(x))$, find the potential $q$.
\end{ip}

Consider the data $\tilde G := \{  \tilde \la_n, \tilde M_n \}_{n = 1}^{\iy}$. Define the function $\hat {\mathcal M}_N(\la)$ via the formulas~\eqref{defMN}, using the data $\{ \la_n, M_n \}_{n = 1}^N$ and $\{ \tilde \la_n, \tilde M_n \}_{n = 1}^N$ defined in this section and
$$
S_N = S \cap \{ 1, \dots, N \}, \quad \tilde S_N = S \cap \{ 1, \dots, N \}.
$$
Put $\rho_n := \sqrt{\la_n}$, $\arg \rho_n \in \left[ -\tfrac{\pi}{2}, \tfrac{\pi}{2}\right)$, and $\xi_n := |\rho_n - \tilde \rho_n| + n^{-2} |M_n - \tilde M_n|$ for $n \ge 1$.

\begin{thm} \label{thm:locD}
Let $L_0 = L_0(q(x))$ be a fixed BVP in the form~\eqref{eqv},\eqref{dbc}, $N = N(L_0)$, $\ga_N = \ga_N(L_0)$. Then there exists $\de_0 > 0$ (depending on $L_0$) such that for any $\de \in (0, \de_0]$ and any complex numbers $\tilde G = \{ \tilde \la_n, \tilde M_n \}_{n = 1}^{\iy}$ satisfying the conditions \eqref{estM} and~\eqref{estxi} there exists a complex-valued function $\tilde q \in L_2(0, \pi)$ being the solution of Inverse Problem~\ref{ip:MD} for $\tilde G$. Moreover,
\begin{equation} \label{estq}
\| q - \tilde q \|_{L_2(0, \pi)} \le C \de,
\end{equation}
where the constant $C > 0$ depends only on $L_0$ and $\de_0$.
\end{thm}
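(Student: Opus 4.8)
The plan is to reduce Theorem~\ref{thm:locD} to the already-developed machinery of Section~3 and Section~4, essentially by repeating the argument of Theorem~\ref{thm:loc} with the Robin boundary data replaced by Dirichlet data. First I would fix $L_0 = L_0(q(x))$ and introduce the appropriate auxiliary solution: instead of $\vv(x,\la)$ with $\vv(0,\la)=1$, $\vv'(0,\la)=h$, I would use the solution $S(x,\la)$ of~\eqref{eqv} satisfying $S(0,\la)=0$, $S'(0,\la)=1$, which plays the analogous role of the ``second'' solution for the Dirichlet problem, and note the standard asymptotics $S(x,\la) = \rho^{-1}\sin\rho x + O(\rho^{-2}\exp(|\mbox{Im}\,\rho|x))$. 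The root-functions $\{S_n\}$ are then formed from $\la$-derivatives at the $\la_n$ exactly as the $\vv_n$ were, and the coefficients $M_n$ are the Laurent coefficients of $M(\la)=\Phi'(0,\la)$ as defined in Section~5. Observe that the weight factor $n^{-2}$ in the new definition $\xi_n := |\rho_n - \tilde\rho_n| + n^{-2}|M_n-\tilde M_n|$ compensates for the fact that for the Dirichlet problem the residues $M_n$ grow like $n^2$ (roughly $M_n \sim n^2$ asymptotically), so that the products $M_n S(x,\la_n)$ remain $O(1)$ and $|M_n S(x,\la_n) - \tilde M_n \tilde S(x,\tilde\la_n)| \le C\xi_n$, which is what makes the series and operator-norm estimates go through identically.

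Next I would set up the main equation in the same Banach space $B = B_C \oplus B_D$: derive, by contour integration and the Residue Theorem as in~\eqref{cont}, the representation
$$
S(x,\la) = \tilde S(x,\la) + \frac{1}{2\pi i}\int_{\ga_N} D_0(x,\la,\xi)\,\hat{\mathcal M}_N(\xi)\,\tilde S(x,\xi)\,d\xi + \sum_{n=N+1}^{\iy}\bigl(\tilde M_n D_0(x,\la,\tilde\la_n)\tilde S(x,\tilde\la_n) - M_n D_0(x,\la,\la_n)\tilde S(x,\la_n)\bigr),
$$
where $D_0(x,\la,\xi) = \int_0^x S(t,\la)S(t,\xi)\,dt$, and then define $\psi(x)$, $\tilde\psi(x)$, and the operator $R(x)$ by the same formulas~\eqref{RCC}--\eqref{RCD} with $\vv$ replaced by $S$ and $D$ by $D_0$. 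The key lemma $\|R(x)\|_{B\to B}\le C\de$ then follows verbatim from~\eqref{estM}, \eqref{estxi}, the bound $|D_0(x,\la,\tilde\la_n)-D_0(x,\la,\la_n)|\le C\exp(|\mbox{Im}\,\rho|x)\xi_n/(|\rho-n|+1)$, and the estimates $|M_n|\le Cn^2$, $|\tilde M_n - M_n|\le n^2\xi_n$ — the $n^2$ factors being exactly absorbed by the $n^{-2}$ built into $\xi_n$, after which the rest of the computation is identical. By the analogue of Corollary~\ref{cor:uniq}, for $\de_0$ small the main equation has a unique solution $\tilde\psi(x)$ for each $x$.

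Then I would reconstruct the potential: define $\eps_0(x)$ by the same integral-plus-series formula~\eqref{defeps} with $\vv,\tilde\vv$ replaced by $S,\tilde S$, set $\eps := -2\eps_0'$, and put $\tilde q(x) := q(x) + \eps(x)$. The analogues of Lemmas~\ref{lem:psi}, \ref{lem:eps}, \ref{lem:Weyl} give $\eps\in L_2(0,\pi)$ with $\|\eps\|_{L_2}\le C\de$, hence~\eqref{estq}, and show that the function $\tilde M(\la) := \sum_{n\in\tilde S}\sum_{\nu=0}^{\tilde m_n - 1}\tilde M_{n+\nu}(\la-\tilde\la_n)^{-\nu-1}$ is the Weyl function $\tilde\Phi'(0,\la)$ of $\tilde L_0 = L_0(\tilde q)$; since the Weyl function determines the Dirichlet potential uniquely, $\tilde G$ is exactly the spectral data of $\tilde L_0$, which is the asserted solution of Inverse Problem~\ref{ip:MD}.

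I expect the main obstacle — the only genuinely new point beyond bookkeeping — to be verifying that the $n^{-2}$-weighting in $\xi_n$ is precisely the right normalization: one must check both the growth rate $M_n = O(n^2)$ (from the asymptotics of $\Phi'(0,\la)$, equivalently of the characteristic function for the Dirichlet problem) and the Lipschitz-type bound $|M_n - \tilde M_n|\le Cn^2\xi_n$, and confirm that with this definition the series $\sum_{n>N}\xi_n n/(|\rho-n|+1)$ appearing in the $R_{CD}$, $R_{DC}$, $R_{DD}$ estimates is still controlled by $\bigl(\sum_{n>N}(n\xi_n)^2\bigr)^{1/2}\le\de$ via Cauchy--Schwarz. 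Once this normalization is in place, every estimate of Section~3 transfers with $C$ depending only on $L_0$ and $\de_0$, and the proof of Theorem~\ref{thm:disc}-type discrete conditions (if desired) would likewise carry over by the same expansion~\eqref{sm1}--\eqref{sm2}. Everything else is a routine transcription of Section~4 with $\vv\mapsto S$, and I would simply remark that the details are identical to the Robin case and omitted, as the section already announces.
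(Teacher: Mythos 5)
Your proposal is correct and is essentially the argument the paper intends: Section~5 explicitly omits the proof, stating that it is analogous to the Robin case of Sections~3--4, and your transcription (replacing $\vv$ by $S$, $D$ by $D_0$, and checking that the $n^{-2}$ weight in $\xi_n$ compensates the $M_n = O(n^2)$ growth so that all operator-norm and series estimates carry over) is exactly the right way to fill in that omission.
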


\begin{thm}
Let $L_0 = L_0(q(x))$ be a fixed BVP in the form~\eqref{eqv},\eqref{dbc}, $N = N(L_0)$. Then there exists $\de_0 > 0$ (depending on $L_0$) such that for any $\de \in (0, \de_0]$ and any complex numbers $\tilde G = \{ \tilde \la_n, \tilde M_n \}_{n = 1}^{\iy}$ satisfying the conditions \eqref{estxi}, \eqref{dist} for $n, k \ge 1$ and~\eqref{estMla} there exists a complex-valued function $\tilde q \in L_2(0, \pi)$ being the solution of Inverse Problem~\ref{ip:MD} for $\tilde G$. Moreover, the estimate~\eqref{estq} holds.
\end{thm}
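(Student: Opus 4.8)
The plan is to deduce this theorem from Theorem~\ref{thm:locD} in precisely the way Theorem~\ref{thm:disc} was deduced from Theorem~\ref{thm:loc}, and to obtain Theorem~\ref{thm:locD} itself by rerunning the main-equation argument of Sections~3 and~4 with the Dirichlet fundamental solution in place of $\vv(x, \la)$. So there are really two layers: a short formal reduction, and a transcription of the hard analytic work from the Robin case.

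First I would carry out the reduction. Among the hypotheses of Theorem~\ref{thm:locD} only the condition \eqref{estM} is not already assumed here, so it suffices to show that \eqref{dist} and \eqref{estMla} force $\max_{\la \in \ga_N} |\hat {\mathcal M}_N(\la)| \le C \de$. For a single multiple eigenvalue $\la_0$ of multiplicity $m$ this is literally the computation in the proof of Theorem~\ref{thm:disc}: expand $\hat {\mathcal M}_N$ as in \eqref{sm1}, replace each $\frac{1}{\la - \tilde \la_j}$ by the finite expansion \eqref{sm2} in powers of $(\la - \la_0)^{-1}$, collect the principal parts at $\la_0$, bound the coefficient of $(\la - \la_0)^{-(s+1)}$ for $s = \overline{0, 2(m-1)}$ by the first two lines of \eqref{estMla}, and bound the tail $\sum_j (\tilde \la_j - \la_0)^{2m-1} \tilde M_j (\la - \la_0)^{-(2m-1)}(\la - \tilde \la_j)^{-1}$ by the third line of \eqref{estMla} (note $(2m-1)/m + (1-m)/m = 1$), using that $|\la - \la_0| \ge c_0$ and $|\la - \tilde \la_j| \ge c_0$ on $\ga_N$ once $\de_0$ is small. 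For several multiple eigenvalues $\la_k$, $k \in S_N$, one performs this decomposition at each pole separately and adds; this is the only genuinely new bookkeeping and it is routine.

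It remains to sketch Theorem~\ref{thm:locD}. Let $S(x, \la)$ be the solution of \eqref{eqv} with $S(0, \la) = 0$, $S'(0, \la) = 1$; it satisfies $S(x, \la) = \rho^{-1} \sin \rho x + O(\rho^{-2} \exp(|\operatorname{Im} \rho| x))$, and for the Dirichlet problem root functions and generalized weight numbers are built from $S$ exactly as they were from $\vv$ in the Robin case. I would set $D(x, \la, \xi) = \int_0^x S(t, \la) S(t, \xi)\, dt$, define the element $\psi(x) = (\psi_C(x), \psi_D(x))$ and the operator $R(x)$ on the same Banach space $B$ by the analogues of \eqref{RCC}--\eqref{RCD} with $S$ replacing $\vv$, and obtain the main equation \eqref{main} by the same contour-integration and Residue-Theorem computation as in Section~3. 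The essential point is the analogue of the estimate \eqref{estR}: here $|S(x, \la_n)| \le C(n+1)^{-1}$, $|D(x, \la, \la_n)| \le C((|\rho - n| + 1)(n+1))^{-1}$ and $|M_n| \le C n^2$, so each product $M_n D(x, \cdot, \la_n)$, $M_n(D(x, \cdot, \tilde \la_n) - D(x, \cdot, \la_n))$, and so on, is again $O(\xi_n /(|\rho - n| + 1))$ with the redefined $\xi_n = |\rho_n - \tilde \rho_n| + n^{-2}|M_n - \tilde M_n|$, whence \eqref{estxi} makes the relevant series $O(\de)$. With $\|R(x)\| \le \tfrac12$ (as in Corollary~\ref{cor:uniq}) the main equation has a unique solution $\tilde \psi(x)$; the regularity bounds of Lemma~\ref{lem:psi}, the reconstruction of $\tilde S(x, \la)$, the functions $\eps_0(x) = \frac{1}{2 \pi i} \int_{\ga_N} \hat {\mathcal M}_N(\xi) S(x, \xi) \tilde S(x, \xi)\, d\xi + \sum_{n > N}(\tilde M_n S(x, \tilde \la_n) \tilde S(x, \tilde \la_n) - M_n S(x, \la_n) \tilde S(x, \la_n))$ and $\tilde q = q - 2 \eps_0'$ (there being no $h, H$ to correct), the bound $\|\eps\|_{L_2} \le C \de$ of Lemma~\ref{lem:eps}, and the verification via Lemma~\ref{lem:Weyl} that the resulting $\tilde M(\la)$ is the Weyl function of $L_0(\tilde q)$, all transcribe verbatim, which yields \eqref{estq}.

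I expect the main obstacle to be exactly this operator estimate: with the Dirichlet normalizations $M_n \asymp n^2$ and $S(x, \la_n) \asymp n^{-1}$ one must track the powers of $n$ carefully to confirm that the natural weight is $n^{-2}|M_n - \tilde M_n|$ and that no polynomial or logarithmic loss creeps into $\eta_n$ or into Lemma~\ref{lem:psi}. Once the correct $\xi_n$ is pinned down, everything else in the proof is a faithful transcription of Sections~3--4.
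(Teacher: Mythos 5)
Your proposal is correct and follows exactly the route the paper intends: the paper omits the Dirichlet proofs precisely because they are the transcription you describe, namely reducing the discrete-data theorem to Theorem~\ref{thm:locD} via the partial-fraction expansion \eqref{sm1}--\eqref{sm2} (with the exponent check $(2m-1)/m+(1-m)/m=1$ handling the remainder term), and proving Theorem~\ref{thm:locD} by rerunning Sections~3--4 with $S(x,\la)$ in place of $\vv(x,\la)$. Your bookkeeping of the Dirichlet normalizations ($|M_n|\le Cn^2$, $S(x,\la_n)=O(n^{-1})$, hence the reweighted $\xi_n=|\rho_n-\tilde\rho_n|+n^{-2}|M_n-\tilde M_n|$) is exactly the point that makes the transcription work.
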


\begin{remark}
The arguments of Sections 3 and 4 imply that Theorems~\ref{thm:loc} and~\ref{thm:locD} are also valid, if we change $\hat {\mathcal M}_N(\la)$ to $\hat M(\la)$ in~\eqref{estM}.
\end{remark}

\medskip

{\bf Acknowledgments.} This work was supported by Grant 20-31-70005 of the Russian Foundation for Basic Research.

\noindent Natalia Pavlovna Bondarenko \\
1. Department of Applied Mathematics and Physics, Samara National Research University, \\
Moskovskoye Shosse 34, Samara 443086, Russia, \\
2. Department of Mechanics and Mathematics, Saratov State University, \\
Astrakhanskaya 83, Saratov 410012, Russia, \\
e-mail: {\it BondarenkoNP@info.sgu.ru}


\begin{thebibliography}{99}

\bibitem{Mar77}
Marchenko, V.A. Sturm-Liouville Operators and their Applications, Naukova Dumka, Kiev (1977) (Russian); English transl., Birkhauser (1986).

\bibitem{Lev84}
Levitan, B.M. Inverse Sturm-Liouville Problems, Nauka, Moscow (1984) (Russian); English transl., VNU Sci. Press, Utrecht (1987).

\bibitem{PT87}
P\"{o}schel, J.; Trubowitz, E. Inverse Spectral Theory, New York, Academic Press (1987).

\bibitem{FY01}
Freiling, G.; Yurko, V. Inverse Sturm-Liouville Problems and Their Applications, Huntington, NY: Nova Science Publishers (2001)

\bibitem{Tk02}
Tkachenko V. Non-selfadjoint Sturm–Liouville operators with multiple spectra, Interpolation Theory, Systems Theory and Related Topics, Oper. Theory Adv. Appl., vol. 134, pp. 403--414, Birkhauser, Basel (2002).

\bibitem{But07}
Buterin, S.A. On inverse spectral problem for non-selfadjoint Sturm-Liouville operator on a finite interval, J. Math. Anal. Appl. 335 (2007), no. 1, 739--749.

\bibitem{Yur02}
Yurko, V.A. Method of Spectral Mappings in the Inverse Problem Theory, Inverse and Ill-Posed Problems Series, Utrecht, VNU Science (2002).

\bibitem{BSY13}
Buterin, S.A.; Shieh, C.-T.; Yurko, V.A. Inverse spectral problems for non-selfadjoint second-order differential operators with Dirichlet boundary conditions, Boundary Value Problems (2013), 2013:180.

\bibitem{AHM08}
Albeverio, S.; Hryniv, R.; Mykytyuk, Y. On spectra of non-self-adjoint Sturm–Liouville operators, Sel. Math. N. Ser. 13 (2008), 571--599.

\bibitem{MW05}
Marletta, M.; Weikard, R. Weak stability for an inverse Sturm–Liouville problem with finite spectral data and complex potential, Inverse Problems 21 (2005), 1275-1290.

\bibitem{HK11}
Horvath, M.; Kiss, M. Stability of direct and inverse eigenvalue problems: the case of complex potentials, Inverse Problems 27 (2011), 095007 (20pp).

\bibitem{BB17}
Bondarenko, N.; Buterin, S. On a local solvability and stability of the inverse transmission eigenvalue problem, Inverse Problems 33 (2017), 115010.

\bibitem{BK19}
Buterin, S.; Kuznetsova, M. On Borg's method for non-selfadjoint Sturm-Liouville operators, Anal. Math. Phys. 9 (2019), 2133--2150.


\end{thebibliography}
\end{document}